\documentclass[a4paper,11pt]{article}
\textwidth380pt
\hoffset-40pt
\voffset+0pt
\headsep-20pt
\textheight510pt

\usepackage{amsmath, amsfonts, amscd, amssymb, amsthm, enumerate, xypic}

\def\alt{\mathrm{alt}}

\DeclareMathOperator{\id}{\operatorname{id}}

\DeclareMathOperator{\Mat}{\operatorname{M}}
\DeclareMathOperator{\Hom}{\operatorname{Hom}}
\DeclareMathOperator{\Mata}{\operatorname{A}}
\DeclareMathOperator{\End}{\operatorname{End}}

\DeclareMathOperator{\NT}{\operatorname{NT}}
\DeclareMathOperator{\GL}{\operatorname{GL}}
\DeclareMathOperator{\Ker}{\operatorname{Ker}}
\DeclareMathOperator{\Vect}{\operatorname{span}}
\DeclareMathOperator{\im}{\operatorname{Im}}

\DeclareMathOperator{\Sp}{\operatorname{Sp}}
\DeclareMathOperator{\rk}{\operatorname{rk}}
\DeclareMathOperator{\Pf}{\operatorname{Pf}}
\DeclareMathOperator{\codim}{\operatorname{codim}}
\renewcommand{\setminus}{\smallsetminus}
\renewcommand{\epsilon}{\varepsilon}


\def\F{\mathbb{F}}

\def\R{\mathbb{R}}

\renewcommand{\L}{\mathbb{L}}


\def\calA{\mathcal{A}}
\def\calB{\mathcal{B}}

\def\calH{\mathcal{H}}

\def\calL{\mathcal{L}}
\def\calM{\mathcal{M}}

\def\calP{\mathcal{P}}

\def\calS{\mathcal{S}}
\def\calT{\mathcal{T}}

\def\calV{\mathcal{V}}
\def\calW{\mathcal{W}}
\def\calX{\mathcal{X}}
\def\calY{\mathcal{Y}}


\def\lcro{\mathopen{[\![}}
\def\rcro{\mathclose{]\!]}}

\theoremstyle{definition}

\theoremstyle{plain}
\newtheorem{theo}{Theorem}

\newtheorem{cor}[theo]{Corollary}
\newtheorem{lemme}[theo]{Lemma}

\theoremstyle{plain}

\theoremstyle{remark}

\title{On affine spaces of alternating matrices with constant rank}
\author{Cl\'ement de Seguins Pazzis\footnote{Universit\'e de Versailles Saint-Quentin-en-Yvelines, Laboratoire de Math\'ematiques
de Versailles, 45 avenue des Etats-Unis, 78035 Versailles cedex, France}
\footnote{e-mail address: dsp.prof@gmail.com}}

\begin{document}

\thispagestyle{plain}

\maketitle
\begin{abstract}
Let $\F$ be a field, and $n \geq r>0$ be integers, with $r$ even.
Denote by $\Mata_n(\F)$ the space of all $n$-by-$n$ alternating matrices with entries in $\F$.
We consider the problem of determining the greatest possible dimension for an affine subspace of
$\Mata_n(\F)$ in which every matrix has rank equal to $r$ (or rank at least $r$). Recently Rubei \cite{Rubei}
has solved this problem over the field of real numbers. We extend her result to all fields with large enough cardinality.
Provided that $n \geq r+3$ and $|\F|\geq \min\bigl(r-1,\frac{r}{2}+2\bigr)$, we also determine the affine subspaces of rank $r$ matrices in $\Mata_n(\F)$ that have the greatest possible dimension, and we point to difficulties for the corresponding problem in the case $n\leq r+2$.
\end{abstract}

\vskip 2mm
\noindent
\emph{AMS MSC:} 15A30; 15A03

\vskip 2mm
\noindent
\emph{Keywords:} affine space, rank, dimension, alternating forms, skew-symmetric matrices, trivial spectrum spaces


\section{Introduction}

Let $\F$ be a field (possibly of characteristic $2$). Let $V$ be a vector space over $\F$ with finite dimension $n$, and $r$ be an even integer in $\lcro 0,n\rcro$. A bilinear form $b$ on $V$ is called alternating whenever $b(x,x)=0$ for all $x \in V$
(if the characteristic of $\F$ is not $2$, this means that $b$ is skew-symmetric, otherwise these notions are distinct), and it is called
symplectic when it is alternating and non-degenerate.
We denote by $\calA^2(V)$ the vector space of all alternating bilinear forms on $V$, and consider the following three problems:
\begin{enumerate}[(1)]
\item What is the greatest possible dimension $d^{=}_r(V)$ for an affine subspace of $\calA^2(V)$ in which every form has rank $r$?
\item What is the greatest possible dimension $d^{\geq}_r(V)$ for an affine subspace of $\calA^2(V)$ in which every form has rank at least $r$?
\item What is the greatest possible dimension $d^{\leq}_r(V)$ for an affine subspace of $\calA^2(V)$ in which every form has rank at most $r$?
\end{enumerate}
In each case, we might also inquire about the structure of the spaces that attain the greatest possible dimension, but this is very difficult in general. Problem (3) has been solved in \cite{dSPaffinesym} over all fields, including an explicit description of the spaces that attain the greatest possible dimension. In the recent \cite{Rubei}, Elena Rubei has solved problem (1) for arbitrary $n$ and $r$ but only over the field of real numbers
and by using specific properties of this field. Naturally, the above problems can also be stated as problems on subspaces of alternating matrices, and it convenient to display the examples in matrix fashion. Thus, we will denote by $\Mata_n(\F)$ the space of all $n$-by-$n$ alternating matrices (i.e.\ the square matrices $A \in \Mat_n(\F)$ such that $X^TAX=0$ for all $X \in \F^n$).

Following a remark of Roy Meshulam \cite{Meshulamsymmetric} for the corresponding problems in spaces of linear operators from one vector space to another, we will see shortly that problems (1) and (2) are intimately connected with so-called trivial spectrum spaces of endomorphisms.
An endomorphism $u$ of $V$ has \textbf{trivial spectrum} if $\Sp_\F(u) \subseteq \{0\}$, i.e.\ it has no non-zero eigenvalue
in the field $\F$ (but is allowed to have nonzero eigenvalues in algebraic extensions of $\F$). In particular, nilpotent endomorphisms
have trivial spectrum, but the converse is not true in general.
Followingly, a \emph{linear} subspace of $\End(V)$ is said to have trivial spectrum when all its elements have trivial spectrum.
We refer to \cite{Quinlan,dSPgivenrank,dSPlargeaffinenonsingular,dSPAtkinsontoGerstenhaber} for past work on such spaces. We mention in particular the following important result, which
generalizes a famous result of Gerstenhaber on spaces of nilpotent matrices \cite{Gerstenhaber}:

\begin{theo}[See \cite{Quinlan,dSPgivenrank}]
The greatest possible dimension for a trivial spectrum linear subspace of $\End(V)$ is
$\dbinom{\dim V}{2}\cdot$
\end{theo}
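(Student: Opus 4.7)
The lower bound $\dim \mathcal{S} \geq \binom{n}{2}$, with $n := \dim V$, is realised by the space of strictly upper-triangular matrices in a fixed basis of $V$: every such matrix is nilpotent, hence trivial-spectrum.

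For the upper bound I would argue by induction on $n$, the base case $n \leq 1$ being immediate since a trivial-spectrum scalar operator vanishes. The natural template is the classical proof of Gerstenhaber's theorem on nilpotent matrix spaces: find a proper nonzero subspace $W \subseteq V$ invariant under every element of $\mathcal{S}$, observe that both the restriction space $\mathcal{S}|_W \subseteq \End(W)$ and the induced quotient space in $\End(V/W)$ remain trivial-spectrum, and conclude by the inductive hypothesis together with the obvious bound $\dim W \cdot \dim(V/W)$ on the off-diagonal block:
\[
\dim \mathcal{S} \leq \binom{\dim W}{2} + \binom{\dim(V/W)}{2} + \dim W \cdot \dim(V/W) = \binom{n}{2}.
\]

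The main obstacle, absent from Gerstenhaber's nilpotent setting, is that a trivial-spectrum endomorphism over $\F$ may have nonzero eigenvalues in $\Kbar$ as long as they avoid $\F^*$. Consequently $\mathcal{S}$ need not admit any common invariant subspace—think of the $\R$-line spanned by a rotation of $\R^2$, whose only invariant subspaces are $\{0\}$ and $\R^2$. To get around this, I would adopt an extremal argument: pick $u_0 \in \mathcal{S}$ whose kernel has maximal dimension; when $\ker u_0 \neq \{0\}$, an analysis of the linear map $A \mapsto (A|_{\ker u_0}, \overline{A})\in \End(\ker u_0)\times\End(V/\ker u_0)$ on $\mathcal{S}$, together with the inductive hypothesis applied to the two factor spaces (which are still trivial-spectrum), closes the induction. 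The genuinely delicate case is when every nonzero element of $\mathcal{S}$ is invertible; here I would pick $u_0$ whose characteristic polynomial has an irreducible factor $p$ of minimal degree, consider the Fitting-type decomposition $V = \ker p(u_0) \oplus \im p(u_0)$, and exploit the trivial-spectrum property of each affine pencil $u_0 + \F A$ with $A \in \mathcal{S}$ to force the other elements of $\mathcal{S}$ to respect this decomposition up to a low-dimensional defect, ultimately reducing to an instance where a common invariant subspace does exist and the previous argument applies.
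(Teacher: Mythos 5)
A preliminary remark: the paper does not prove this statement --- it is quoted from \cite{Quinlan,dSPgivenrank} --- so your proposal can only be measured against the arguments in those references, not against a proof in the text. Your lower bound (via $\NT_n(\F)$) is correct, but the upper bound contains two genuine gaps. First, in the case where some nonzero $u_0\in\calS$ is singular, the map $A\mapsto\bigl(A|_{\Ker u_0},\overline{A}\bigr)$ is only defined if $\Ker u_0$ is invariant under every element of $\calS$, and choosing $u_0$ with kernel of maximal dimension does not make this happen: such a kernel is not a common invariant subspace in general. The correct statement of this type is the Flanders--Atkinson consequence recorded as Corollary \ref{cor:minimalFA}: if $u_0$ has \emph{maximal rank} $r$ in $\calS$ and $|\F|>r$, then every element of $\calS$ maps $\Ker u_0$ into $\im u_0$. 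This produces a map $\calS\to\Hom(\Ker u_0,\im u_0)$ rather than into $\End(\Ker u_0)\times\End(V/\Ker u_0)$, it changes the dimension count, and it imports a cardinality hypothesis that the theorem as stated does not carry.

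Second, the all-invertible case --- which is where the whole difficulty of the theorem lies --- is not actually argued: no mechanism is given for ``forcing the other elements to respect the decomposition up to a low-dimensional defect'', and the announced endpoint (a reduction to a situation admitting a common invariant subspace) is unattainable in general, because optimal trivial spectrum spaces can act irreducibly. For instance $\Mata_n(\R)\subseteq\Mat_n(\R)$ has dimension $\binom{n}{2}$, has trivial spectrum (real alternating matrices have purely imaginary nonzero eigenvalues), and admits no nontrivial common invariant subspace for any $n\geq 2$; already for $n=2$ the line spanned by a plane rotation is extremal and irreducible. The published proofs rest on different tools (Quinlan's argument, and the results of \cite{dSPgivenrank} on matrices of prescribed rank in large subspaces, exploited through the equivalence between trivial spectrum linear subspaces $\calS$ and affine subspaces $\id_V+\calS$ of invertible operators). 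As it stands, your sketch does not close the upper bound.
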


In \cite{dSPlargeaffinenonsingular}, the spaces that attain the greatest possible dimension, which we call the \textbf{optimal trivial spectrum subspaces}, were related to the classification of (potentially non-symmetric) non-isotropic bilinear forms over $\F$
(provided that $|\F|>2$).

Now, say that we have an affine subspace $\calB$ of $\calA^2(V)$ in which every element has rank $n$, i.e.\ is symplectic. Take
an arbitrary $s_0 \in \calB$. Assign to every bilinear form $s$ on $V$ the unique endomorphism
$u \in \End(V)$ such that $s(x,y)=s_0(x,u(y))$ for all $(x,y)\in V^2$.
This way, we create an isomorphism $\Phi$ from $\calA^2(V)$ to the space $\calA_{s_0}$ of all $s_0$-alternating endomorphisms of $V$
(an endomorphism $u$ is $s_0$-alternating whenever $s_0(x,u(x))=0$ for all $x \in V$).
Now, let $u \in \calA_{s_0}$. Then $u-\id$ is non singular if and only if $s_0- \Phi^{-1}(u)$ is symplectic.
By a simple homogeneity argument, this yields that $u$ has trivial spectrum if and only each form in the affine subspace
$s_0+\F \Phi^{-1}(u)$ is symplectic. Hence, denoting by $\overrightarrow{\calB}$ the translation vector space of $\calB$, we gather that
$\Phi(\overrightarrow{\calB})$ has trivial spectrum. And conversely,
if we have a symplectic form $s_0$ on $V$ together with a linear subspace $L$ of $\calA_{s_0}$
with trivial spectrum, then $s_0+\Phi^{-1}(L)$ is an affine subspace of symplectic forms on $V$, with the same dimension as $L$.

Consequently, solving the case $n=r$ in problems (1) and (2) (they are equivalent in that case)
amounts to determining the greatest possible dimension for a trivial spectrum linear subspace of $\calA_s$
when $s$ is an arbitrary symplectic form on an $n$-dimensional vector space (with $n$ even).
If $\F$ is algebraically closed, this can be obtained as a consequence of corresponding results on nilpotent linear subspaces
of $\calA_s$ (see \cite{structuredGerstenhaber1} for fields with characteristic other than $2$, and \cite{structuredGerstenhaber3} for fields with characteristic $2$). In this note, our first major result is a generalization to all fields of large enough cardinality:

\begin{theo}\label{theo:trivialspectrum}
Let $V$ be an $\F$-vector space of even dimension $2n$, and $s$ be a symplectic form on $V$.
Assume that $|\F| \geq 2n-1$.
Let $\calS$ be a trivial spectrum linear subspace of $\calA_s$.
Then $\dim \calS \leq n(n-1)$.
\end{theo}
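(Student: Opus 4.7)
The plan is to prove the theorem by induction on $n$, with $\dim V = 2n$. The base case $n=1$ is immediate: in dimension $2$ the condition $s(x,u(x))=0$ for all $x$ forces $u$ to be a scalar, so $\calA_s = \F\cdot \id$ and the trivial spectrum condition collapses $\calS$ to $\{0\}$.

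For the inductive step, I would fix a nonzero $x\in V$ and apply rank--nullity to the evaluation map $u \mapsto u(x)$, writing $\dim \calS = \dim(\calS\cdot x) + \dim \calS_x$ with $\calS_x := \{u\in \calS : u(x)=0\}$. First, $\dim(\calS\cdot x) \leq 2n-2$: the $s$-alternating identity gives $\calS\cdot x \subseteq x^{\perp_s}$, and the trivial spectrum hypothesis enforces $\calS\cdot x \cap \F x = \{0\}$ (as $u(x)=\lambda x$ with $\lambda \neq 0$ would yield a nonzero eigenvalue of $u$). Second, for $u \in \calS_x$, polarizing $s(y,u(y)) = 0$ together with $u(x)=0$ shows $\im u \subseteq x^{\perp_s}$, so $u$ stabilizes both $\F x$ and $x^{\perp_s}$ and descends to an endomorphism $\bar u$ of the symplectic quotient $(x^{\perp_s}/\F x,\bar s)$ of dimension $2(n-1)$. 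The identity $\chi_u(T) = T^2 \chi_{\bar u}(T)$ shows $\bar u$ retains trivial spectrum, and the descended subspace lies in $\calA_{\bar s}$; hence the inductive hypothesis (applicable because $|\F|\geq 2n-1 \geq 2(n-1)-1$) bounds its dimension by $(n-1)(n-2)$. Denoting the descent kernel by $\calS_x^0 := \{u\in \calS_x : u(x^{\perp_s}) \subseteq \F x\}$, one arrives at
\[
\dim \calS \;\leq\; (2n-2) + (n-1)(n-2) + \dim \calS_x^0 \;=\; n(n-1) + \dim \calS_x^0.
\]

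The crux is therefore to exhibit some nonzero $x\in V$ for which $\calS_x^0 = 0$. A direct analysis via the $s$-alternating identity shows that the ambient space $\calA_x^0 := \{u\in \calA_s : u(x)=0 \text{ and } u(x^{\perp_s}) \subseteq \F x\}$ has dimension $2n-2$, with each element determined by a vector $w \in x^{\perp_s}/\F x$ through the explicit formulas $u|_{x^{\perp_s}} = x\otimes s(w,\cdot)$ and $u(z) = s(w,z)\,x + w$ for any fixed $z$ with $s(z,x)=1$. Thus $\calS_x^0 = 0$ amounts to the injectivity of the induced linear map $\calS \to \calA_s/\calA_x^0$, a property that fails on a locus cut out by polynomial conditions in $x$ of degree bounded by a function of $n$. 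The hypothesis $|\F|\geq 2n-1$ enters here through a polynomial-avoidance argument of Schwartz--Zippel flavour, guaranteeing the existence of $x$ off this locus. Producing such an $x$ and extracting the sharp cardinality threshold $2n-1$ --- or, alternatively, handling the structural consequences should $\calS_x^0 \neq 0$ for every $x$ --- is where I expect the real technical difficulty of the proof to lie.
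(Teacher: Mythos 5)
Your skeleton matches the paper's: induction on $n$, rank--nullity for the evaluation map at a well-chosen $x$, the bound $\dim(\calS\cdot x)\leq 2n-2$ from $\F x\oplus\calS x\subseteq x^{\perp_s}$, and a descent of $\calS_x$ to a symplectic space of dimension $2(n-1)$. But the step you correctly identify as the crux --- finding $x$ with $\calS_x^0=0$ --- is exactly the part you have not proved, and your suggested route (a Schwartz--Zippel avoidance argument) is not how it is done and is not obviously workable: the bad locus is $\bigcup_{u\in\calS\setminus\{0\}}\{x : u(x)=0,\ u(x^{\perp_s})\subseteq\F x\}$, a union over $\calS$ (i.e.\ the image of a projection), not a hypersurface cut out by polynomials of controlled degree in $x$ alone, so there is no direct low-degree counting argument yielding the threshold $|\F|\geq 2n-1$. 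As it stands, the proposal proves only $\dim\calS\leq n(n-1)+\dim\calS_x^0$, which is a genuine gap.

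The missing idea in the paper is an operator--vector duality combined with the Flanders--Atkinson lemma. One forms $\widehat{\calS}=\{\widehat{x}\mid x\in V\}\subseteq\Hom(\calS,V)$ with $\widehat{x}\colon u\mapsto u(x)$, and chooses $x$ so that $\widehat{x}$ has \emph{maximal} rank $r'$ in $\widehat{\calS}$; since $r'\leq 2n-2<|\F|$, Corollary \ref{cor:minimalFA} applies to $\widehat{\calS}$ and shows that every $\widehat{y}$ maps $\Ker\widehat{x}=\calS_x$ into $\im\widehat{x}=\calS x$, i.e.\ $\im u\subseteq\calS x$ for every $u\in\calS_x$. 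Enlarging $\calS x$ to a hyperplane $W$ of $x^{\perp_s}$ with $\F x\oplus W=x^{\perp_s}$, one gets $\im u\subseteq W$ with $W$ $s$-regular, whence (by selfadjointness) $u$ vanishes on $W^{\perp_s}$ and the restriction $\calS_x\to\calA_{s_W}$ is \emph{injective}. In your notation this forces $\calS_x^0=0$: if $u(x^{\perp_s})\subseteq\F x$ then $u(x^{\perp_s})\subseteq\F x\cap W=\{0\}$, and the selfadjointness argument kills $u$ entirely. This is also precisely where the hypothesis $|\F|\geq 2n-1$ is used. Without this (or an equivalent) argument, your proof is incomplete.
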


The optimality of this result (apart from the restriction on the cardinality of $\F$) is illustrated in the following example. Let $\calW$ be an optimal trivial spectrum subspace of
$\Mat_n(\F)$ (e.g.\ the space $\NT_n(\F)$ of all strictly upper-triangular matrices).
Then one sees that the set of all matrices of the form
$$\begin{bmatrix}
A & B \\
0 & A^T
\end{bmatrix}, \quad \text{with $A \in \calW$ and $B \in \Mata_n(\F)$,}$$
represents, in the standard basis of $\F^{2n}$, a space of $s$-alternating endomorphisms
for the symplectic form $s$ whose Gram matrix in that basis equals the standard symplectic matrix
$$\begin{bmatrix}
0 & I_n \\
-I_n & 0
\end{bmatrix}.$$
Obviously, this is a trivial spectrum space with dimension $2\dbinom{n}{2}=n(n-1)$.

As an immediate corollary of Theorem \ref{theo:trivialspectrum} and of this example, we obtain:

\begin{theo}\label{theo:diminv}
Let $V$ be a vector space of even dimension $2n$, and $s$ be a symplectic form on $V$.
Assume that $|\F| \geq 2n-1$. Then the greatest possible dimension for
an affine subspace of $\calA^2(V)$ consisting of symplectic forms is $n(n-1)$.
\end{theo}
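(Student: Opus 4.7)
The plan is to derive the theorem as a direct corollary of Theorem~\ref{theo:trivialspectrum} together with the explicit construction that follows it. Both a matching upper and lower bound are already available, so the argument reduces to organising the transfer of information through the isomorphism $\Phi$ set up in the introduction.

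For the upper bound, I would fix an affine subspace $\calB \subseteq \calA^2(V)$ consisting of symplectic forms, pick a base point $s_0 \in \calB$, form the associated isomorphism $\Phi : \calA^2(V) \to \calA_{s_0}$, and set $L := \Phi(\overrightarrow{\calB})$. For every $t \in \overrightarrow{\calB}$ and every $\lambda \in \F$, the element $\lambda t$ still lies in the vector space $\overrightarrow{\calB}$, so $s_0 + \lambda t \in \calB$ is symplectic; translating through $\Phi$, this means that $\id + \lambda\,\Phi(t)$ is invertible for every $\lambda \in \F$, which is equivalent to $\Phi(t)$ having trivial spectrum. Hence $L$ is a trivial spectrum linear subspace of $\calA_{s_0}$, and Theorem~\ref{theo:trivialspectrum} yields $\dim \calB = \dim L \leq n(n-1)$. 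The cardinality hypothesis $|\F|\geq 2n-1$ enters precisely here.

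For the matching lower bound, I would invoke the example given immediately after Theorem~\ref{theo:trivialspectrum}: picking an optimal trivial spectrum subspace $\calW$ of $\Mat_n(\F)$ (for instance $\NT_n(\F)$, of dimension $\binom{n}{2}$) and letting $s$ be the symplectic form whose Gram matrix in the standard basis of $\F^{2n}$ is the standard symplectic matrix, the space
\[
L := \left\{ \begin{bmatrix} A & B \\ 0 & A^T \end{bmatrix} : A \in \calW,\ B \in \Mata_n(\F) \right\}
\]
is a trivial spectrum linear subspace of $\calA_s$ with dimension $2\binom{n}{2}=n(n-1)$. Pulling back through the corresponding $\Phi^{-1}$, the affine subspace $s + \Phi^{-1}(L)$ of $\calA^2(V)$ consists entirely of symplectic forms and has dimension $n(n-1)$, saturating the bound.

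There is essentially no obstacle left: the whole non-trivial content of the theorem is packaged inside Theorem~\ref{theo:trivialspectrum} and the accompanying example. Only routine verifications remain, namely that $\Phi$ preserves dimension and faithfully converts the trivial-spectrum condition on $\Phi(t)$ into the symplecticity of every point of the affine line $s_0 + \F\,t$; both were already explained in the introduction.
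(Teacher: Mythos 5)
Your proposal is correct and follows exactly the paper's own route: the paper states Theorem~\ref{theo:diminv} as an immediate corollary of Theorem~\ref{theo:trivialspectrum} (upper bound, via the correspondence $\Phi$ set up in the introduction) together with the explicit block-matrix example following it (lower bound). Your verification that the trivial-spectrum condition on $\Phi(t)$ matches the symplecticity of every point of $s_0+\F t$ is the same homogeneity argument the paper sketches.
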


\begin{theo}\label{theo:dimmin}
Let $V$ be a vector space of dimension $n$, and $r=2s$ be an even integer in $\lcro 0,n\rcro$.
Assume that $|\F| \geq n-1$ if $n$ is even, and $|\F| \geq n-2$ if $n$ is odd.
Then
$$d^{\geq}_r(V)=s(s-1)+r(n-r)+\frac{(n-r)(n-r-1)}{2}=\dim \calA^2(V)-s^2.$$
\end{theo}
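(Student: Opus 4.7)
The plan is to prove the two inequalities separately. The lower bound will come from an explicit construction that grafts an optimal affine subspace of symplectic forms onto an $r$-dimensional subspace of $V$; the upper bound will be obtained by a reduction to Theorem \ref{theo:trivialspectrum} organised around an element of minimum rank in the candidate space.

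For the lower bound, I fix an $r$-dimensional subspace $W \subseteq V$. Theorem \ref{theo:diminv} applied to $W$ supplies an affine subspace $\calC \subseteq \calA^2(W)$ of dimension $s(s-1)$ in which every form is symplectic. The preimage of $\calC$ under the restriction map $\calA^2(V) \to \calA^2(W)$, whose kernel has dimension $\binom{n}{2} - \binom{r}{2} = r(n-r) + \binom{n-r}{2}$, is then an affine subspace of the claimed dimension, every element of which is symplectic on $W$ and therefore has rank at least $r$.

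For the upper bound, let $\calB \subseteq \calA^2(V)$ be an affine subspace in which every form has rank at least $r$. I will work with $r' := \min_{b \in \calB} \rk(b)$ and set $s' := r'/2$; note that $r' \geq r$, with $r' \leq n$ always and $r' \leq n-1$ when $n$ is odd. Pick $b_0 \in \calB$ with $\rk(b_0) = r'$, let $K := \ker b_0$, and choose a complement $V_0$, so that $\sigma := b_0|_{V_0 \times V_0}$ is a symplectic form on $V_0$. In the block decomposition $V = V_0 \oplus K$, each $\Delta \in \overrightarrow{\calB}$ is described by blocks $A \in \calA^2(V_0)$, $B \in \Hom(K, V_0^*)$ and $C \in \calA^2(K)$, and I introduce the linear map
\[
\chi \colon \overrightarrow{\calB} \to \Hom(K, V_0^*) \oplus \calA^2(K), \qquad \Delta \mapsto (B, C).
\]
Since $\im \chi$ has dimension at most $r'(n-r') + \binom{n-r'}{2}$, everything reduces to bounding $\ker \chi$; this is the step where I expect the main work.

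The key observation is that for $\Delta \in \ker \chi$ the form $b_0 + t\Delta$ is block-diagonal and has rank equal to $\rk(B_0 + tA)$, where $B_0$ denotes the Gram matrix of $\sigma$. Because $b_0 + t\Delta$ lies in $\calB$ and $r'$ was chosen as the minimum rank on $\calB$, this rank must be at least $r' = \dim V_0$, which forces $B_0 + tA$ to be invertible for every $t \in \F$. Under the isomorphism $\Phi$ from the introduction, this is precisely the statement that $B_0^{-1}A$ is a $\sigma$-alternating endomorphism of $V_0$ with trivial spectrum, so $\Phi(\ker \chi)$ is a trivial-spectrum subspace of $\calA_\sigma$. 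The cardinality hypothesis $|\F| \geq r'-1$ needed to invoke Theorem \ref{theo:trivialspectrum} on $V_0$ is supplied by our assumption, and it yields $\dim \ker \chi \leq s'(s'-1)$. Rank-nullity then gives
\[
\dim \calB \leq s'(s'-1) + r'(n-r') + \binom{n-r'}{2} = \binom{n}{2} - (s')^2 \leq \binom{n}{2} - s^2.
\]
The subtle point, and what I see as the crucial idea, is working with $r'$ rather than $r$ itself: it is exactly this choice that turns the inequality \emph{$\rk \geq r$} on $\calB$ into the sharp equality \emph{$\rk(B_0 + tA) = \dim V_0$} on $\ker \chi$, thereby activating Theorem \ref{theo:trivialspectrum} without any induction on $r$ or $n$.
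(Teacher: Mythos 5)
Your proof is correct and follows essentially the same route as the paper: it reduces to an element of minimum rank $r'$, splits $V$ along its radical, bounds the off‑radical blocks by a trivial dimension count, and bounds the remaining fiber by the symplectic constant‑rank bound (your direct use of Theorem \ref{theo:trivialspectrum} via $\Phi$ is just an unwinding of Theorem \ref{theo:diminv}, which is what the paper invokes, and your passage to $r'$ is the paper's ``downward induction on $r$'' phrased without induction).
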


\begin{theo}\label{theo:dim}
Let $V$ be a vector space of dimension $n$, and $r=2s$ be an even integer in $\lcro 0,n\rcro$.
Assume that $|\F| \geq \max\left(r-1,2+\frac{r}{2}\right)$.
Then
$$d^{=}_r(V)=\begin{cases}
s\,(n-s-1) & \text{if $n \neq r+1$} \\
s\,(s+1) & \text{if $n=r+1$.}
\end{cases}$$
\end{theo}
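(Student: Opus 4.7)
My plan is to prove matching upper and lower bounds.

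\emph{Upper bound.} Fix $b_0 \in \calB$, let $W_0 := \operatorname{rad}(b_0)$ of dimension $n-r$, and pick a complement $U$ of dimension $r$. Writing $S_0 := b_0|_{U \times U}$ and block-decomposing $\dot b \in \overrightarrow{\calB}$ along $V = U \oplus W_0$, the Schur complement of $b_0 + t\dot b$ along the $r\times r$ top-left block equals
$$t\,\dot b_{W_0 W_0} + t^2 (\dot b_{UW_0})^T(S_0 + t\dot b_{UU})^{-1}\dot b_{UW_0}.$$
Since $b_0 + t\dot b$ belongs to $\calB$ and hence has rank $r$ for every $t \in \F$, this rational function vanishes on all of $\F$ and therefore identically (the hypothesis on $|\F|$ suffices). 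Extracting the constant and linear coefficients in $t$ gives $\dot b_{W_0 W_0} = 0$ and $M^T S_0^{-1} M = 0$ with $M := \dot b_{UW_0}$, the latter saying that the columns of $M$ span an isotropic subspace of the symplectic space $(U^\ast, S_0^{-1})$. Consider the short exact sequence $0 \to \calK \to \overrightarrow{\calB} \to \calM \to 0$ induced by $\dot b \mapsto M$. Elements of $\calK$ are supported on $U\times U$ and $S_0 + \calK$ is an affine subspace of symplectic forms on $U$, so Theorem~\ref{theo:diminv} gives $\dim\calK \leq s(s-1)$.

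\emph{The bound on $\calM$.} It remains to bound $\dim \calM$. When $n-r=1$ the condition is vacuous (a $1\times 1$ alternating matrix vanishes automatically), so $\dim\calM\leq r = 2s$. When $n-r\geq 2$, polarization of $M^T S_0^{-1} M = 0$ gives, for every antisymmetric $N \in \Mat_{n-r}(\F)$, the identity $\tr\bigl(N\cdot M_1^T S_0^{-1} M_2\bigr) = 0$ on $\calM \times \calM$, exhibiting $\calM$ as isotropic for the symmetric bilinear form $\Omega_N$ on $\Mat_{r, n-r}(\F)$ defined by this trace. When $n-r$ is even I choose $N$ invertible antisymmetric; then $\Omega_N$ is non-degenerate of split hyperbolic type (matrices supported on any half of the columns form an isotropic subspace of dimension $\frac{r(n-r)}{2}$), so its Witt index equals $s(n-r)$ and $\dim \calM \leq s(n-r)$. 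When $n-r$ is odd I restrict $\calM$ to a generic hyperplane $\ker\phi\subseteq W_0$ to reduce to the even case; the kernel of the restriction consists of matrices of the form $v \otimes w_0^*$ for a fixed $w_0 \notin \ker\phi$, and an orthogonality argument on $v$ supplies the missing bound. Summing $\dim\calK$ and $\dim\calM$ yields the upper bounds $s(n-s-1)$ (when $n = r$ or $n\geq r+2$) and $s(s+1)$ (when $n = r+1$).

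\emph{Lower bound.} When $n = r$ apply Theorem~\ref{theo:diminv}. When $n = r+1$, odd-dimensional alternating forms have rank at most $n-1 = r$, so rank-at-least-$r$ coincides with rank-$r$ and Theorem~\ref{theo:dimmin} supplies the optimal affine space. For $n \geq r+2$, in a standard symplectic basis of $U$ split into two Lagrangian blocks of size $s$, I consider the affine family
$$\begin{pmatrix} 0 & I_s + A^T & 0 \\ -(I_s+A) & -B & M \\ 0 & -M^T & 0 \end{pmatrix}$$
where $A$ ranges over an optimal trivial-spectrum subspace of $\Mat_s(\F)$ (dimension $\binom{s}{2}$), $B$ over $\Mata_s(\F)$ (dimension $\binom{s}{2}$), and $M$ freely over $\Mat_{s,n-r}(\F)$ (dimension $s(n-r)$). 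The inverse of the $r \times r$ top-left block has zero upper-left $s \times s$ sub-block, so the Schur complement of the above matrix vanishes and every member of the family has rank exactly $r$; the total dimension is $s(n-s-1)$. The main obstacle of the argument is the bound on $\dim\calM$ when $n-r \geq 2$: Flanders' theorem on rank-bounded matrix spaces alone gives only $s\max(r, n-r)$, which is too weak when $n - r < r$, so one must exploit the full polarized isotropy relation via the forms $\Omega_N$, with the odd case requiring the extra hyperplane reduction described above.
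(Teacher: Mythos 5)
Your architecture is the same as the paper's: fix a rank-$r$ element, split $V$ along its radical, extract the two Flanders--Atkinson identities $\dot b_{W_0W_0}=0$ and $M^TS_0^{-1}M=0$, bound the part supported on $U\times U$ by Theorem~\ref{theo:diminv}, and bound the off-diagonal part $\calM$ by an isotropy argument. Your lower bounds are correct (invoking Theorem~\ref{theo:dimmin} at $n=r+1$ is a legitimate shortcut, and your explicit family for $n\geq r+2$ works --- though it is the \emph{lower-right} $s\times s$ block of the inverse of the top-left $r\times r$ block that vanishes, not the upper-left one). Two steps of the upper bound, however, have genuine gaps.

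First, the assertion that the Schur complement, which vanishes at every $t\in\F$ where $S_0+t\dot b_{UU}$ is invertible, must therefore vanish identically ``(the hypothesis on $|\F|$ suffices)'' is unjustified and fails by naive degree counting: clearing the denominator $\det(S_0+t\dot b_{UU})$ yields polynomial entries of degree up to $r+1$, known to vanish only at the at least $|\F|-r$ points where the determinant is nonzero, so one would need roughly $|\F|\geq 2r+2$, far above $\max\bigl(r-1,2+\tfrac{r}{2}\bigr)$. This is exactly the difficulty the paper's Corollary~\ref{cor:FLAalt} is built to overcome: the rank-$\leq r$ condition is re-encoded via Pfaffians of principal $(r+2)\times(r+2)$ submatrices, which have degree only $\tfrac{r}{2}+1$ in $t$, so that $|\F|\geq\tfrac{r}{2}+2$ forces their identical vanishing and permits extension of scalars to a field where the standard Flanders--Atkinson lemma applies. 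You need this device (or an equivalent) before you may extract coefficients.

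Second, the bound $\dim\calM\leq s(n-r)$. Your even case is sound in characteristic $\neq 2$ (in characteristic $2$ one must take $N$ alternating rather than merely antisymmetric and recheck the trace duality; the paper allows characteristic $2$ throughout). But the odd case, as sketched, cannot work: the kernel of restriction to $\ker\phi$ is $\{v\phi : v\in V_\phi\}$ for a subspace $V_\phi\subseteq\F^r$ on which the isotropy conditions impose \emph{no} constraint by themselves, since every rank-one matrix automatically has totally singular range. Thus $\dim V_\phi$ can a priori be as large as $r=2s$ (e.g.\ if $\calM\supseteq\F^r\phi$), and $\dim\calM'+\dim V_\phi$ overshoots the target by $s$. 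Completing the argument requires a dichotomy: either some $\phi$ satisfies $\dim V_\phi\leq s$, or for a $\phi$ with $\dim V_\phi>s$ one exploits the cross-orthogonality $v^TS_0^{-1}M'\in\F\phi$ (for $v\in V_\phi$, $M'\in\calM$) to cut down the columns of the restricted space. That dichotomy is precisely the content of the paper's Lemma~\ref{lemma:existlagrangian}, which handles all $\dim U\geq 2$, odd or even, and all characteristics uniformly. As written, this half of your upper bound is missing.
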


Let us immediately show that the stated dimensions can be attained in those theorems.
We start with the second one. Letting $\calM$ be an affine subspace of non-singular matrices of $\Mat_s(\F)$ with dimension
$\frac{s(s-1)}{2}$ (for example, $I_s+\NT_s(\F)$), we take
$$\widetilde{\calM}^{(n)}_{\alt}:=\left\{
\begin{bmatrix}
A & B & C \\
-B^T & [0] & [0] \\
-C^T & [0] & [0]
\end{bmatrix} \mid
A \in \Mata_s(\F), \; B \in \calM, \;  C \in \Mat_{s,n-r}(\F)\right\} \subseteq \Mata_n(\F).$$
It is easily seen that $\dim \widetilde{\calM}^{(n)}_\alt=s(s-1)+s(n-2s)=s(n-s-1)$. And because of the assumption that all the matrices in $\calM$
are non-singular it is also clear that $\widetilde{\calM}^{(n)}_\alt$ has constant rank $2s$.

Next, if $n=r+1$ we can take an affine subspace
$\calH \subseteq \Mata_{n-1}(\F)$ with dimension $s(s-1)$ and whose elements are all non-singular (e.g.\ given by the previous example), and then take the space
$$\calH^+:=
\left\{\begin{bmatrix}
H & C \\
-C^T & 0
\end{bmatrix} \mid H \in \calH, \; C \in \F^{n-1}\right\} \subseteq \Mata_n(\F).$$
Clearly all the matrices in $\calH^+$ have rank at least $n-1$, and since they are alternating their rank is even, and hence at most $n-1$.
And finally $\dim \calH^+=\dim \calH + (n-1)=s(s+1)$.

Finally, let us start from an affine subspace $\calH$ of $\Mata_r(\F)$ in which every element is non-singular, and with $\dim \calH=s(s-1)$,
and consider the space
$$\overline{\calH}^{(n)}:=\left\{\begin{bmatrix}
H & C \\
-C^T & D
\end{bmatrix} \mid H \in \calH, \; C\in \Mat_{r,n-r}(\F),\;   D \in \Mata_{n-r}(\F) \right\} \subseteq \Mata_n(\F).$$
Again, it is clear that all the matrices in $\overline{\calH}^{(n)}$ have rank at least $r$, and the dimension of the space is
$$\dim \overline{\calH}^{(n)}=\dim \Mata_n(\F)-\codim_{\Mata_r(\F)} \calH=\dbinom{n}{2}-s^2.$$
Hence, it only remains to prove the inequalities
$$d^{\geq}_r(V) \leq \dim \calA^2(V)-s^2$$
and
$$d^{=}_r(V) \leq \begin{cases}
s\,(n-s-1) & \text{if $n \neq r+1$} \\
s\,(s+1) & \text{if $n=r+1$.}
\end{cases}$$
The proof of the first one will be deduced from Theorem \ref{theo:diminv} thanks to
Meshulam's method from \cite{Meshulamsymmetric} (Section \ref{section:dimmin}). The case $n=r$ in the second one is given by Theorem \ref{theo:diminv}.
For the other cases, we will use the same strategy as in Rubei's article \cite{Rubei} to deduce the inequality from Theorem \ref{theo:diminv}.

As an offspring of our method, in Section \ref{section:maxspaces} we will obtain the following
partial result on the affine spaces that attain the greatest possible dimension in problem (1):

\begin{theo}\label{theo:dimmax}
Let $V$ be a vector space of dimension $n$, and $r=2s>0$ be an even integer with $n>r+2$.
Assume that $|\F| \geq \max\left(r-1,2+\frac{r}{2}\right)$. Let $\calS$ be an affine subspace of $\calA^2(V)$ in which every element has rank $r$, and with
$\dim \calS=s\,(n-s-1)$.

Then there exists a basis of $V$ and an affine subspace $\calM \subseteq \GL_s(\F)$ with dimension $\frac{s(s-1)}{2}$ such that
$\calS$ is represented in the said basis by $\widetilde{\calM}_\alt^{(n)}$.
Moreover, the equivalence class\footnote{Two subsets $\calX$ and $\calY$ of $\Mat_{n,p}(\F)$ are called equivalent
when there exist invertible matrices $P \in \GL_n(\F)$ and $Q \in \GL_p(\F)$ such that $\calY=P \calX Q$, meaning that
$\calX$ and $\calY$ represent the same set of linear mappings in a different choice of bases.}
of $\calM$ is uniquely determined by $\calS$.
\end{theo}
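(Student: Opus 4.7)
The plan is to derive the structural conclusion from the equality case of the upper-bound proof for $d^{=}_r(V)$ in Theorem \ref{theo:dim}. That upper bound will be established in the body of the paper using Rubei's strategy, ultimately reducing to the case $n = r$ which is covered by Theorem \ref{theo:diminv}. The central invariant I aim to extract is a subspace $W \subseteq V$ of dimension $n - s$ that is totally isotropic for every form in $\calS$. Such a $W$ is automatically a maximal isotropic subspace for each $s \in \calS$ and contains its radical. Once $W$ is in hand, fixing any complement $V' \subseteq V$ of dimension $s$ represents every $s \in \calS$ in an adapted basis by a block matrix $\begin{bmatrix} A_s & B_s \\ -B_s^T & 0 \end{bmatrix}$ with $A_s \in \Mata_s(\F)$ and $B_s \in \Mat_{s, n-s}(\F)$ of full row rank $s$ (forced by $\rk s = r = 2s$).

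The main obstacle is establishing the existence of such a common isotropic $W$. My approach would be to track the equality case in Rubei's reduction: fix $s_0 \in \calS$ with radical $R_0$, and exploit that $\rk(s_0 + \lambda t) = r$ for every $\lambda \in \F$ is a strong condition on each $t \in \overrightarrow{\calS}$ (thanks to the cardinality hypothesis on $\F$, this translates into a genuine polynomial identity in $\lambda$, not merely pointwise vanishing at a finite number of scalars). Such identities constrain how $t$ can act on $R_0$ and on the maximal isotropic subspaces of $s_0$; saturation of $\dim \calS$ should then force these constraints to be tight enough to pin down a common isotropic subspace of dimension $n - s$. The base case $n = r$ of an induction is subsumed in Theorem \ref{theo:diminv}, possibly coupled with information on the structure of optimal trivial spectrum subspaces of $\calA_s$. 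The assumption $n > r + 2$ is likely needed to ensure enough room is available after removing a direction.

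Once $W$ is secured, I analyze $\calB := \{B_s : s \in \calS\}$ in $\Mat_{s, n-s}(\F)$: saturation of $\dim \calS$ simultaneously forces $\{A_s : s \in \calS\} = \Mata_s(\F)$ and $\dim \calB = s(s-1)/2 + s(n-r)$. Since $\calB$ is an affine subspace of rank-$s$ matrices in $\Mat_{s, n-s}(\F)$, the classical classification of maximal constant-rank affine subspaces of such a matrix space yields a splitting $W = W_1 \oplus W_2$ with $\dim W_1 = s$, in which $B_s$ decomposes as $[B_s',\, C_s]$ with $B_s' \in \calM \subseteq \GL_s(\F)$ ranging over an affine subspace of dimension $s(s-1)/2$ and $C_s$ ranging freely in $\Mat_{s, n-r}(\F)$. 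Uniqueness of the equivalence class of $\calM$ follows immediately from the observation that different choices of adapted basis act on $\calM$ by left and right multiplication by invertible matrices (reflecting changes of basis in $V'$ and in $W_1$), which is precisely the equivalence relation of the footnote.
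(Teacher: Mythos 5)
Your overall architecture matches the paper's: find a common totally isotropic subspace $W$ of dimension $n-s$, pass to the block form $\begin{bmatrix} A & B \\ -B^T & 0\end{bmatrix}$, observe that the $B$-blocks form an affine space of constant rank $s$ and small codimension in $\Mat_{s,n-s}(\F)$, and invoke the classification of such spaces (Theorem \ref{theo:affinemax}). But the two hardest steps are asserted rather than proved. The existence of $W$ is the heart of the matter, and ``saturation of $\dim\calS$ should then force these constraints to be tight enough to pin down a common isotropic subspace'' is not an argument. The paper obtains $W$ in two concrete moves: first, the equality case of Lemma \ref{lemma:existlagrangian} (applied to the identity $B(b)^TK^{-1}B(b)=0$ coming from Corollary \ref{cor:FLAalt}) produces a Lagrangian $\calL$ of the nondegenerate part $(\F^r, K)$ such that $K^{-1}B(\calS)$ is \emph{exactly} the set of matrices with range in $\calL$; second, a polarization of the further Flanders--Atkinson identity $(K^{-1}N)^TA(\theta(N))K^{-1}N=0$, in which one separates three distinct column indices $i,j,k$ in $\lcro 1,n-r\rcro$, shows that $\calL$ is totally $A(b)$-singular for every $b\in\calS$. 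Both moves are precisely where $n>r+2$ enters (the equality case of the lemma needs $\dim U>2$, and the index separation needs $n-r\geq 3$); your remark that the hypothesis is ``likely needed to ensure enough room'' does not substitute for either.

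The uniqueness claim has the same defect. Saying that different adapted bases act on $\calM$ by two-sided multiplication presupposes that $W$ is the \emph{unique} common totally isotropic subspace of dimension $n-s$; without that, two representations of $\calS$ as $\widetilde{\calM}^{(n)}_{\alt}$ and $\widetilde{\calM'}^{(n)}_{\alt}$ could a priori use different subspaces $W\neq W'$, and no block-triangular change of basis would relate them. The paper spends all of Step 4 proving this uniqueness (first $\dim(\calL\cap\calL')\geq n-s-1$, then ruling out $\dim(\calL\cap\calL')=n-s-1$ using $n>r+2$ again), and only then reduces to the uniqueness statement of Theorem \ref{theo:affinemax}. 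As written, your proposal is a correct outline of the paper's strategy with its two essential lemmas missing.
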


As stated earlier, the classification, up to equivalence, of the affine subspaces of $\Mat_s(\F)$ included with $\GL_s(\F)$ and with dimension $\frac{s(s-1)}{2}$ is well understood when $|\F|>2$ (it is connected to the one of non-isotropic quadratic forms over $\F$).

Conversely, it is easily checked that if $\calM$ and $\calM'$ are equivalent affine subspaces of $\Mat_s(\F)$ then
$\widetilde{\calM}_\alt^{(n)}$ and $\widetilde{\calM'}_\alt^{(n)}$ are congruent affine subspaces of $\Mata_n(\F)$.

If $n=r+2$, there are examples that do not fit the result of Theorem \ref{theo:dimmax}: for instance, one can take an affine subspace $\calH$ of dimension $s(s+1)$ of
$\Mata_{r+1}(\F)$ in which every matrix has rank $r$, and consider the affine space of all matrices of the form
$$\begin{bmatrix}
H & [0]_{(r+1) \times 1} \\
[0]_{1 \times (r+1)} & 0
\end{bmatrix} \quad \text{with $H \in \calH$.}$$
An inspection of the proof of Theorem \ref{theo:dimmax} makes us worry that the special cases $n \in \{r+1,r+2\}$ are far more difficult than the case $n>r+2$, and we prefer to abstain from going any further.

\section{Technical lemmas}

Our proof techniques essentially rely on basic block-matrix results from the theory of vector spaces of bounded rank matrices.
Chiefly, we will use the following result, which we call the Flanders-Atkinson lemma, and several of its corollaries.
We refer to \cite{AtkinsonPrim}, \cite{FLR} and Section 2 of \cite{LLD1} for various proofs and versions of it.

\begin{lemme}[Flanders-Atkinson lemma]
Let $n,p,r$ be integers with $0<r \leq \min(n,p)$. Assume that $|\F|>r$.
Let $J_r:=\begin{bmatrix}
I_r & 0 \\
0 & 0
\end{bmatrix}$ and $M=\begin{bmatrix}
A & C \\
B & D
\end{bmatrix}$ belong to $\Mat_{n,p}(\F)$, with $A \in \Mat_r(\F)$ and so on.
If $\rk(sJ_r+t M) \leq r$ for all $(s,t) \in \F^2$, then $D=0$ and $B A^kC=0$ for every integer $k \geq 0$.
\end{lemme}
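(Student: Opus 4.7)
The plan is to deduce the equations $D=0$ and $BA^kC=0$ from a single generic identity in $\F(t)$, obtained from the pointwise rank hypothesis by a projective zero-counting argument. First I would observe that every entry of $sJ_r+tM$ is a homogeneous linear form in $(s,t)$, so every $(r+1)\times(r+1)$ minor of $sJ_r+tM$ is a homogeneous polynomial $P(s,t)$ of degree $r+1$. By hypothesis, each such $P$ vanishes on $\F^2\setminus\{(0,0)\}$, hence at every point of $\mathbb{P}^1(\F)$. A nonzero homogeneous polynomial of degree $r+1$ in two variables factors over $\overline{\F}$ into $r+1$ linear forms, so it admits at most $r+1$ projective zeros; since $|\mathbb{P}^1(\F)|=|\F|+1\geq r+2$, each such $P$ must vanish identically. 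Therefore $sJ_r+tM$, viewed as a matrix over the field $\F(s,t)$, has rank at most $r$.

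Next I would specialize $s=1$ and work over $\F(t)$. Since $\det(I_r+tA)$ takes the value $1$ at $t=0$, the matrix $I_r+tA$ is invertible in $\GL_r(\F(t))$, and Schur complementation applied to
$$J_r+tM=\begin{bmatrix} I_r+tA & tC \\ tB & tD \end{bmatrix}$$
reduces it, without change of rank, to the block-diagonal matrix with diagonal blocks $I_r+tA$ and $tD-t^2B(I_r+tA)^{-1}C$. The generic rank bound then forces the Schur complement to vanish, yielding the identity
$$D = tB(I_r+tA)^{-1}C \quad \text{in } \F(t)^{(n-r)\times(p-r)}.$$

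Finally I would reinterpret this identity inside the ring of formal power series $\F[[t]]$: both sides are regular at $t=0$, and the Neumann expansion $(I_r+tA)^{-1}=\sum_{k\geq 0}(-t)^kA^k$ converges in $\Mat_r(\F[[t]])$, so substituting yields
$$D = \sum_{k\geq 0}(-1)^k t^{k+1}\, BA^k C.$$
Matching the coefficient of $t^0$ gives $D=0$, and matching the coefficient of $t^{k+1}$ for each $k\geq 0$ gives $BA^kC=0$.

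The hard part will be the first step: upgrading the pointwise rank bound to a generic one under the sharp hypothesis $|\F|>r$. A naive single-variable specialization (fixing $s=1$ and invoking a polynomial identity of degree $\leq r+1$ in $t$) would only yield the conclusion under the stronger assumption $|\F|\geq 2r+1$, since one would need the identity to be tested on more than $r+1$ values of $t$ at which $I_r+tA$ is already invertible; it is precisely by exploiting the full two-variable homogeneity and counting projective zeros in $\mathbb{P}^1(\F)$ that one matches the hypothesis $|\F|>r$ stated in the lemma.
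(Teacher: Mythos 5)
Your proof is correct. Note that the paper does not actually prove this lemma: it treats it as known and refers the reader to \cite{AtkinsonPrim}, \cite{FLR} and Section 2 of \cite{LLD1}, so there is no internal proof to compare against. Your argument is essentially the standard modern one found in those references, and all three steps check out: (i) the passage from the pointwise rank bound to the generic one is sound, since a nonzero homogeneous minor of degree $r+1$ has at most $r+1$ zeros in $\mathbb{P}^1(\F)$ while the hypothesis forces it to vanish at all $|\F|+1\geq r+2$ of them (this is exactly the zero-counting device the paper itself uses in its proof of Corollary \ref{affinecor}); (ii) the Schur complement reduction over $\F(t)$ is valid because $\det(I_r+tA)$ has constant term $1$; and (iii) the identity $D=tB(I_r+tA)^{-1}C$ lives in the subring of rational functions regular at $t=0$, so the Neumann expansion in $\F[[t]]$ and coefficient extraction legitimately yield $D=0$ and $BA^kC=0$ for all $k\geq 0$. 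Your closing remark is also well taken: the two-variable homogeneity is precisely what makes the sharp bound $|\F|>r$ work, whereas a one-variable specialization at $s=1$ would require avoiding the roots of $\det(I_r+tA)$ and hence a larger field; this is the same reason the paper's Corollary \ref{affinecor}, which only assumes the rank bound on the affine line $J_r+tM$, needs the slightly stronger hypothesis $|\F|>r+1$ to homogenize back to the two-parameter family.
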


\begin{cor}\label{affinecor}
Let $n,p,r$ be integers with $0<r \leq \min(n,p)$. Assume that $|\F|>r+1$.
Let $J_r:=\begin{bmatrix}
I_r & 0 \\
0 & 0
\end{bmatrix}$ and $M=\begin{bmatrix}
A & C \\
B & D
\end{bmatrix}$ belong to $\Mat_{n,p}(\F)$, with $A \in \Mat_r(\F)$ and so on.
If $\rk(J_r+t M) \leq r$ for all $t \in \F$, then 
$$D=0 \quad \text{and} \quad \forall k \geq 0, \; B A^kC=0.$$
\end{cor}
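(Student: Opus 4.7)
The plan is to upgrade the affine hypothesis $\rk(J_r+tM)\leq r$ for all $t\in\F$ to the full homogeneous hypothesis $\rk(sJ_r+tM)\leq r$ for all $(s,t)\in\F^2$ required by the Flanders--Atkinson lemma, and then quote that lemma directly. The cardinality assumption $|\F|>r+1$ has been chosen precisely so that a polynomial interpolation argument effects this upgrade.

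Concretely, I would fix any $(r+1)\times(r+1)$ submatrix of $J_r+tM$ and view its determinant $P(t)$ as a polynomial in the indeterminate $t$. Since each entry of $J_r+tM$ is affine in $t$, we have $\deg P\leq r+1$. The hypothesis says $P$ vanishes on all of $\F$, and as $|\F|>r+1$, we conclude $P\equiv 0$ in $\F[t]$. Doing this for every $(r+1)\times(r+1)$ submatrix shows that the full matrix $J_r+tM$ has rank at most $r$ identically as a matrix over $\F[t]$.

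From this identical vanishing, two things follow. On the one hand, the leading coefficient of each such $P$ is the corresponding $(r+1)\times(r+1)$ minor of $M$; since $P\equiv 0$ makes this leading coefficient zero, every $(r+1)\times(r+1)$ minor of $M$ vanishes, so $\rk(M)\leq r$. On the other hand, for any $(s,t)\in\F^2$ with $s\neq 0$, the equality $\rk(sJ_r+tM)=\rk(J_r+(t/s)M)\leq r$ is immediate; and when $s=0$ we have $\rk(tM)\leq \rk(M)\leq r$. Hence $\rk(sJ_r+tM)\leq r$ for all $(s,t)\in\F^2$. Since $|\F|>r+1>r$, the Flanders--Atkinson lemma applies and yields $D=0$ and $BA^kC=0$ for every $k\geq 0$, which is exactly the desired conclusion.

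There is essentially no obstacle here; the only thing to keep an eye on is the degree count and the strict inequality $|\F|>r+1$, which is exactly what is needed to kill a degree-$(r+1)$ polynomial from its values on $\F$. No additional ingredient beyond the Flanders--Atkinson lemma itself is required.
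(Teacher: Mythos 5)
Your proposal is correct and follows essentially the same route as the paper: a polynomial interpolation argument (using $|\F|>r+1$ to kill the degree-$(r+1)$ minors) upgrades the affine hypothesis to the homogeneous one, after which the Flanders--Atkinson lemma is quoted. The paper phrases this via homogeneous polynomials vanishing on more than $r+1$ points of the projective line, while you work with one-variable polynomials and recover the case $s=0$ from the leading coefficient; these are the same argument in different clothing.
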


\begin{proof}[Proof of Corollary \ref{affinecor}]
One simply uses the
assumption $|\F|>r+1$ to gather that all the $(r+1) \times (r+1)$ minors of $x J_r+y M$ vanish for all $(x,y)\in \F^2$
(take such a minor as a function of $(x,y)$, and note that it is a homogeneous polynomial of degree $r+1$ that vanishes
at more than $r+1$ points of the projective line of $\F^2$). Then one applies the Flanders-Atkinson lemma.
\end{proof}

\begin{cor}\label{cor:FLAalt}
Let $n,r$ be integers with $0<r \leq n$ and $r$ even. Assume that $|\F|> \frac{r}{2}+1$.
Let $J_K:=\begin{bmatrix}
K & 0 \\
0 & 0
\end{bmatrix}$ and $M=\begin{bmatrix}
A & B \\
-B^T & D
\end{bmatrix}$ belong to $\Mata_n(\F)$, with $A$ and $K$ in $\Mata_r(\F)$, and so on. Assume that $K$ is invertible.
If $J_K+t M$ has rank at most $r$ for all $t \in \F$, then $D=0$ and $B^T K^{-1}(AK^{-1})^k B=0$ for every integer $k \geq 0$.
\end{cor}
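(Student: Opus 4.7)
The plan is to bridge the cardinality gap between the hypothesis $|\F|>\frac{r}{2}+1$ assumed here and the hypothesis $|\F|>r+1$ required by Corollary \ref{affinecor}, by exploiting the alternating structure via Pfaffians, and then to reduce to that corollary after a scalar extension.

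Since $r$ is even and $J_K+tM$ is alternating of size $n$, having rank at most $r$ is equivalent to the vanishing of all its principal $(r+2)\times(r+2)$ Pfaffians (recall that an alternating matrix of rank $2m$ always admits a non-singular principal $2m\times 2m$ submatrix, so its largest non-vanishing principal Pfaffian has size equal to its rank). Each such Pfaffian of $J_K+tM$ is a polynomial in $t$ of degree at most $\frac{r+2}{2}=\frac{r}{2}+1$; because it vanishes at more than $\frac{r}{2}+1$ points of $\F$, it is identically zero. Extending scalars to any overfield $\L$ of $\F$ with $|\L|>r+1$ (for instance $\L=\F(x)$), we deduce that $J_K+tM$, now viewed over $\L$, still has rank at most $r$ for every $t\in\L$.

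Left-multiplying by the invertible block matrix $\begin{bmatrix}K^{-1} & 0 \\ 0 & I_{n-r}\end{bmatrix}$ converts the pencil into
$$J_r+tN,\qquad\text{where}\qquad N:=\begin{bmatrix}K^{-1}A & K^{-1}B \\ -B^T & D\end{bmatrix},$$
which still has rank at most $r$ for every $t\in\L$. Applying Corollary \ref{affinecor} over $\L$ to this pencil yields $D=0$ together with $(-B^T)(K^{-1}A)^k(K^{-1}B)=0$ for every $k\geq 0$. The elementary identity $(K^{-1}A)^k K^{-1}=K^{-1}(AK^{-1})^k$ rewrites this as $B^TK^{-1}(AK^{-1})^kB=0$ for every $k\geq 0$, which is the announced conclusion.

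The only non-routine step is the Pfaffian argument in the second paragraph: it replaces the degree-$(r+1)$ minor bound used to derive Corollary \ref{affinecor} from the Flanders-Atkinson lemma by a degree-$(\frac{r}{2}+1)$ Pfaffian bound, and this is precisely where the alternating hypothesis is used to halve the cardinality requirement.
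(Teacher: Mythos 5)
Your proof is correct and follows essentially the same route as the paper: reduce to Corollary \ref{affinecor} over a larger field, with the cardinality gap bridged by observing that the principal $(r+2)\times(r+2)$ Pfaffians of $J_K+tM$ are polynomials of degree at most $\frac{r}{2}+1$ in $t$, hence vanish identically, and that an alternating matrix of rank $2m$ always has an invertible principal $2m\times 2m$ submatrix. The only cosmetic difference is that you clear $K$ by left multiplication (hence the extra identity $(K^{-1}A)^kK^{-1}=K^{-1}(AK^{-1})^k$) where the paper multiplies on the right.
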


\begin{proof}
There is a subtlety here as we have assumed that $|\F|> \frac{r}{2}+1$ instead of
$|\F|> r+1$. Of course, if the latter holds then it suffices to apply Corollary \ref{affinecor} after right-multiplying with
$K^{-1} \oplus I_{n-r}$.
Now, assume that $\F$ is finite. We can choose a field extension $\mathbb{L}$ of $\F$ such that $|\L|>r+1$.
Then we claim that $J_K+t M$ has rank at most $r$ for all $t \in \mathbb{L}$. The key to obtain this is to use the Pfaffian
(denoted by $\Pf$) instead of the determinant! First of all, it is critical to note that if an alternating matrix $N$ has rank at least $2s$ for some integer $s \geq 1$, then one of its principal $2s \times 2s$ submatrices is invertible. This can be proved as follows: first of all,
we can write $r'=\rk N$ and pick a direct factor of the radical of $N$ that is spanned by vectors of the standard basis. The
corresponding $r' \times r'$ submatrix of $N$ is then invertible. And then one proceeds by downward induction by using the development
of the Pfaffian along the last row/column.

Now, take an arbitrary subset $I$ of $\lcro 1,n\rcro$ with cardinality $r+2$, and for $N \in \Mata_n(\F)$
denote by $N_{I,I}$ the corresponding principal submatrix.
The mapping $t \in \L \mapsto \Pf((J_K+t M)_{I,I})$ is a polynomial function of degree at most $\frac{r+2}{2}$, and it vanishes everywhere on $\F$. Since $|\F|> \frac{r}{2}+1$, it also vanishes everywhere on $\L$.
Varying $I$ shows, thanks to the previous remark, that $J_K+tM$ has rank at most $r$ for all $t \in \L$.

Then, applying Corollary \ref{cor:FLAalt} in $\L$ yields the claimed result.
\end{proof}

As a consequence of the conclusion ``$D=0$" in the Flanders-Atkinson lemma, we also have the following result
in terms of subspaces of linear mappings:

\begin{cor}\label{cor:minimalFA}
Let $U$ and $V$ be finite-dimensional vector spaces, and $\calS$ be a linear subspace of $\Hom(U,V)$.
In $\calS$, take an element $u_0$ of maximal rank $r$, and assume that $|\F|>r$.
Then every element of $\calS$ maps $\Ker u_0$ into $\im u_0$.
\end{cor}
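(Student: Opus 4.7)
The plan is to reduce to the matrix form of the Flanders--Atkinson lemma by a suitable choice of bases. Since $u_0$ has rank $r$, I can pick a basis of $\im u_0$ and extend it to a basis of $V$, then pick a basis of a complement of $\Ker u_0$ in $U$ whose image is the chosen basis of $\im u_0$, and complete with a basis of $\Ker u_0$. In these bases, $u_0$ is represented by $J_r=\begin{bmatrix} I_r & 0 \\ 0 & 0 \end{bmatrix}$, the last $\dim U - r$ basis vectors of $U$ span $\Ker u_0$, and the first $r$ basis vectors of $V$ span $\im u_0$.

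Now I would take an arbitrary $u\in \calS$ and write its matrix in these bases as $M=\begin{bmatrix} A & C \\ B & D \end{bmatrix}$ with $A \in \Mat_r(\F)$. The key observation is that for every $(s,t)\in \F^2$ the combination $s\,u_0 + t\,u$ lies in the linear subspace $\calS$, so by maximality of the rank of $u_0$ in $\calS$ it has rank at most $r$. Since $|\F|>r$, the Flanders--Atkinson lemma applies directly (with the pair $(s,t)$ ranging over all of $\F^2$) and yields $D=0$ (the additional Flanders--Atkinson conclusions $B A^k C = 0$ will not be needed here).

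Finally, $D=0$ is precisely the translation, in the chosen bases, of the statement that the image of $\Ker u_0$ under $u$ lies in $\im u_0$: the last $\dim U - r$ columns of $M$ represent the restriction of $u$ to $\Ker u_0$, and their having zero entries in the last $\dim V - r$ rows means that $u(\Ker u_0)$ is contained in the span of the first $r$ basis vectors of $V$, namely in $\im u_0$. No genuine obstacle appears: everything reduces to recognizing that the pencil $s\,u_0+t\,u$ stays inside the bounded-rank space $\calS$, which is immediate because $\calS$ is linear (not merely affine); this is exactly what makes the hypothesis $|\F|>r$ sufficient, bypassing the $|\F|>r+1$ bound required in Corollary \ref{affinecor}.
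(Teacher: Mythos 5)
Your proof is correct and is exactly the argument the paper intends: the paper presents this corollary as an immediate consequence of the conclusion $D=0$ in the Flanders--Atkinson lemma, using precisely the observation that $s\,u_0+t\,u$ stays in the linear (not merely affine) subspace $\calS$ and hence has rank at most $r$ for all $(s,t)\in\F^2$. Your basis choice and the translation of $D=0$ into $u(\Ker u_0)\subseteq \im u_0$ are the standard fleshing-out of this; the only (trivial) case not covered is $r=0$, where the statement is immediate.
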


Finally, we recall a consequence of the main result of \cite{dSPlargeaffinerankbelow}, to be used in Section \ref{section:maxspaces}:

\begin{theo}\label{theo:affinemax}
Let $p$ and $r$ be positive integers with $1 \leq r \leq p$. Assume that $|\F|>2$.
Let $\calT$ be an affine subspace of $\Mat_{r,p}(\F)$ in which every matrix has rank $r$, and
assume that $\codim_{\Mat_{r,p}(\F)} \calT \leq \frac{r(r+1)}{2}\cdot$
Then, there exists an affine subspace $\calM$ of $\Mat_r(\F)$ in which every matrix is invertible, with
$\dim \calM=\frac{r(r-1)}{2}$, and such that $\calT$ is equivalent to the space
$$\widetilde{\calM}^{(p)}:=\left\{\begin{bmatrix}
B & C
\end{bmatrix} \mid (B,C) \in \calM \times \Mat_{r,p-r}(\F)\right\}.$$
Moreover, the equivalence class of $\calM$ is uniquely determined by $\calT$.
\end{theo}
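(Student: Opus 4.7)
The plan is to reduce $\calT$ to a normalized form via invertible row and column operations, and then use the dimension budget together with the trivial-spectrum bound to extract the block structure of $\widetilde{\calM}^{(p)}$.

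Concretely, I would fix a reference point $T_0 \in \calT$ and, since $\rk T_0 = r$, use left-right multiplications to arrange $T_0 = [I_r \mid 0]$. Writing any $M \in \overrightarrow{\calT}$ as a block $[A \mid C]$ with $A \in \Mat_r(\F)$ and $C \in \Mat_{r,p-r}(\F)$, I would introduce $\Lambda := \{A : [A \mid 0] \in \overrightarrow{\calT}\}$, i.e.\ the kernel of the projection $\pi_2 : M \mapsto C$. Since $[I_r + tA \mid 0] \in \calT$ has rank $r$ for every $A \in \Lambda$ and every $t \in \F$, the matrix $I_r + tA$ must be invertible for all such $t$, so $A$ has trivial spectrum; the bound of \cite{Quinlan,dSPgivenrank} then gives $\dim \Lambda \leq r(r-1)/2$. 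Comparing with the hypothesis $\dim \overrightarrow{\calT} \geq rp - r(r+1)/2 = r(p-r) + r(r-1)/2$ forces $\pi_2$ to be surjective onto $\Mat_{r,p-r}(\F)$ and $\dim \Lambda = r(r-1)/2$.

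The main obstacle is to upgrade this to the full decomposition $\overrightarrow{\calT} = \Lambda \oplus (\{0\} \times \Mat_{r,p-r}(\F))$ after a further column change of basis that preserves $T_0$. Picking any linear section $\phi : \Mat_{r,p-r}(\F) \to \Mat_r(\F)$ of $\pi_2$, a right-multiplication by $\begin{bmatrix} I_r & 0 \\ Y & I_{p-r} \end{bmatrix}$ adjusts $\phi$ by $C \mapsto CY$ modulo $\Lambda$; the task is to show that $\phi$ modulo $\Lambda$ is of the form $C \mapsto -CY$ for some $Y \in \Mat_{p-r,r}(\F)$. Equivalently, one must prove a \emph{common-pivot} property, namely the existence of an $r$-element column subset $S \subseteq \{1,\ldots,p\}$ such that $T|_S \in \GL_r(\F)$ for every $T \in \calT$; a column permutation bringing $S$ to $\{1,\ldots,r\}$ then makes $\pi_1(\overrightarrow{\calT})$ a trivial-spectrum subspace of $\Mat_r(\F)$, and the dimension count already performed yields $\Ker \pi_1 = \{0\} \times \Mat_{r,p-r}(\F)$. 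This common-pivot step is the deep part and is essentially the content of the main result of \cite{dSPlargeaffinerankbelow}; attacking it directly would require exploiting the rank-$r$ condition on the full family of combinations $[I_r + sA + t\phi(C) \mid tC]$ for $A \in \Lambda$, $C \in \Mat_{r,p-r}(\F)$, and $(s,t) \in \F^2$, together with the Flanders--Atkinson lemma in a sufficiently large field extension and the optimality of $\Lambda$.

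Once the decomposition is secured, $\calM := I_r + \Lambda \subseteq \GL_r(\F)$ has dimension $r(r-1)/2$ and $\calT$ is represented as $\widetilde{\calM}^{(p)}$. For the uniqueness of the equivalence class of $\calM$, I would begin with an equivalence $\widetilde{\calM_1}^{(p)} = P \cdot \widetilde{\calM_2}^{(p)} \cdot Q$ and examine the block $Q_{21}$ of $Q$: the image of $C \mapsto PCQ_{21}$ must lie in the translation space of $\calM_1$, since $PBQ_{11} + PCQ_{21}$ has to remain in $\calM_1$ as $C$ varies over $\Mat_{r,p-r}(\F)$. A dimension comparison then yields $\calM_1 = P \calM_2 Q_{11}$, so that $(P, Q_{11})$ realizes the desired equivalence $\calM_1 \sim \calM_2$.
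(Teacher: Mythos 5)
Your proposal has a genuine gap at exactly the step you flag yourself: the ``common-pivot''/splitting step, i.e.\ showing that after a suitable column operation one has $\overrightarrow{\calT}=\Lambda\oplus\bigl(\{0\}\times\Mat_{r,p-r}(\F)\bigr)$ (equivalently, that some invertible recombination of the columns yields an invertible leading $r\times r$ block for \emph{every} element of $\calT$ simultaneously). You describe what a direct attack ``would require'' but do not carry it out, and this is precisely the hard content of the theorem: it is where the hypothesis $|\F|>2$ and the classification of optimal trivial spectrum spaces from \cite{dSPlargeaffinenonsingular} enter, and a counting argument of the kind you set up cannot produce it on its own. For context, the paper does not reprove this either: its ``proof'' of Theorem \ref{theo:affinemax} consists of observing that for $r\geq 2$ the statement is the special case $n=r$ of theorem 3 of \cite{dSPlargeaffinerankbelow}, plus an elementary separate treatment of $r=1$ (affine hyperplanes of $\Mat_{1,p}(\F)$ avoiding $0$), since the quoted theorem is stated there for $r\geq 2$. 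If you intend your concession to be a citation of that reference, your argument essentially collapses to the paper's, with the surrounding reductions re-derived unnecessarily; if you intend it as a proof, the central step is missing. You also omit any discussion of $r=1$, which the paper is careful about.

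The parts you do write out are correct and worth keeping in mind: the normalization $T_0=[I_r\mid 0]$, the observation that $\Lambda=\Ker\pi_2$ consists of trivial spectrum matrices so that $\dim\Lambda\leq \frac{r(r-1)}{2}$, the resulting forcing of $\pi_2$ onto $\Mat_{r,p-r}(\F)$ and $\dim\Lambda=\frac{r(r-1)}{2}$, and the uniqueness argument via the block decomposition of $Q$ (with $C=0$ giving $Q_{11}\in\GL_r(\F)$ and $P\calM_2Q_{11}\subseteq\calM_1$, then equality by equal dimensions). The uniqueness part is in fact a complete and correct direct argument, which the paper itself does not spell out. But the existence statement remains unproven in your write-up.
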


\begin{proof}
When $r \geq 2$, Theorem \ref{theo:affinemax} is the special case $n=r$ in theorem 3 of \cite{dSPlargeaffinerankbelow}. We wish to note that the result remains true in the special case $r=1$. In that case, it is essentially an obvious result on the (affine) hyperplanes of $\Mat_{1,n}(\F)$ that do not contain the zero vector: simply, take such a hyperplane $\calT$, choose $x_1 \in \calT$ and then extend this vector to a basis of $\Mat_{1,n}(\F)$ by using a basis of the translation vector space of $\calT$. This shows that $\calT$ is equivalent to the space of all row vectors with first entry equal to $1$,
and hence the conclusion is satisfied for $\calM:=\{1\}$ (note that the uniqueness statement is obvious in that case).
A close inspection of the proof of theorem 3 of \cite{dSPlargeaffinerankbelow} also reveals that if $n=r$ then the case $r=1$ need not be discarded.
\end{proof}

The proof of the main theorem of \cite{dSPlargeaffinerankbelow} is difficult: it relies upon the classification of the optimal trivial spectrum subspaces of $\Mat_r(\F)$, as given in \cite{dSPlargeaffinenonsingular}.

\section{Trivial spectrum linear subspaces of alternating endomorphisms}\label{section:trivialspectrum}

Here we prove Theorem \ref{theo:trivialspectrum} by induction on $n$.
The case $n=0$ is trivial, and now we assume that $n \geq 1$.
The idea is to use the operator-vector duality, in a way that is reminiscient to the basic idea of \cite{dSPAtkinsontoGerstenhaber}.
For $x \in V$, we consider the linear operator
$$\widehat{x} : u \in \calS \mapsto u(x) \in V.$$
This yields a linear subspace
$$\widehat{\calS}=\{\widehat{x} \mid x \in V\} \subseteq \Hom(\calS,V).$$
Now, let $x \in V \setminus \{0\}$. Since $\calS$ has trivial spectrum, we have
$\F x \cap \calS x=\{0\}$.
But we also have $\calS x \subseteq \{x\}^{\bot_s}$ because $\calS$ consists of $s$-alternating operators.
And finally $x \in \{x\}^{\bot_s}$ since $s$ is alternating. Therefore
$$\F x \oplus \calS x \subseteq \{x\}^{\bot_s}.$$
It follows in particular that $\dim(\calS x) \leq 2n-2$, that is $\rk \widehat{x} \leq 2n-2$.

Now, we take $x \in V \setminus \{0\}$ such that $\widehat{x}$ has the greatest possible rank in $\widehat{\calS}$, denoted by $r'$.
In particular $r' \leq 2n-2$ and Corollary \ref{cor:minimalFA} yields that
$\widehat{\calS}$ maps every vector of $\Ker \widehat{x}$ into $\im \widehat{x}$ (this is where the assumption $|\F|\geq 2n-1$ comes into play).
So, set
$$\calS':=\Ker \widehat{x}=\{u \in \calS : u(x)=0\}.$$
Now, even though we might have $r'<2n-2$, we can always embed $\im \widehat{x}=\calS x$ into a linear hyperplane $W$ of $\{x\}^{\bot_s}$
such that $\F x \oplus W=\{x\}^{\bot_s}$. In particular $W$ is $s$-regular.
The previous remark yields that $\im u \subseteq W$ for every $u \in \calS'$.
Because each $u \in \calS'$ is $s$-alternating and hence $s$-selfadjoint, we deduce that the elements of $\calS'$
vanish everywhere on $W^{\bot_s}$. And finally $V=W\oplus W^{\bot_s}$ because $W$ is $s$-regular.

Hence, by restricting to $W$ we obtain a linear injection from $\calS'$
to a linear subspace of $\calA_{s_W}$, where $s_W$ stands for the symplectic form induced by $s$ on $W^2$.
The range of the said injection obviously has trivial spectrum.
Hence by induction (because $|\F| \geq 2n-1 \geq 2(n-1)-1$) we find
$$\dim \calS' \leq (n-1)(n-2).$$
By the rank theorem, we conclude that
$$\dim \calS=\dim \calS'+\dim \calS x \leq (n-1)(n-2)+(2n-2)=(n-1)\,n.$$
This completes the proof of Theorem \ref{theo:trivialspectrum}.

\section{Affine subspaces of alternating forms with bounded rank}\label{section:dimmin}

Now, we prove Theorems \ref{theo:dimmin} and \ref{theo:dim}.
To start with, we let $\calS$ be an affine subspace of $\calA^2(V)$ in which every element has rank \emph{at least} $r$, and we assume that
$|\F| \geq n-1$ if $n$ is even, and $|\F| \geq n-2$ if $n$ is odd.

The case $n=r$ has already been dealt with in Theorem \ref{theo:trivialspectrum}, so we assume $n>r$.
By downward induction on $r$, we can assume that $\calS$ actually contains an element $s_0$ of rank $r$
(indeed, the dimension stated in the first part of Theorem \ref{theo:dim} is a non-increasing functions of $r$, as seen
by its second expression, and the cardinality assumption on $|\F|$ garantees that $|\F| \geq r_0-1$ for the least possible rank
$r_0$ of the elements of $\calS$).

Let us then take an arbitrary basis of $V$ in which the last $n-r$ vectors span the radical of $s_0$, and let us represent the elements of $\calS$
in that basis: for each $b \in \calA^2(V)$ we have a corresponding alternating matrix
$$M(b)=\begin{bmatrix}
A(b) & B(b) \\
-B(b)^T & D(b)
\end{bmatrix} \quad \text{where $A(b) \in \Mata_r(\F), B(b) \in \Mat_{r,n-r}(\F), D(b) \in \Mata_{n-r}(\F)$.}$$
Note that $B(s_0)=0$ and $D(s_0)=0$.
Set
$$K:=A(s_0) \in \GL_r(\F) \cap \Mata_r(\F)$$
and consider the affine subspace
$$\calT:=\{b \in \calS : B(b)=0 \; \text{and} \; D(b)=0\}$$
(which contains $s_0$).
Every element of $\calT$ has rank at least $r$, so
$A(\calT)$ is an affine subspace of matrices of $\Mata_r(\F)$ with constant rank $r$.
By Theorem \ref{theo:diminv}, we have
$$\dim \calT=\dim A(\calT) \leq s(s-1),$$
and we conclude by the rank theorem for affine mappings that
$$\dim \calS \leq \dim \calT+r(n-r)+\dim \Mata_{n-r}(\F)\leq \dim \Mata_n(\F)-s^2.$$
Thus Theorem \ref{theo:dimmin} is now proved.

\vskip 3mm
In the remainder, we turn to the proof of Theorem \ref{theo:dim}: we assume that every element of $\calS$ has rank $r$, and we modify the cardinality assumption on $\F$:
here we assume that $|\F| \geq \max\left(r-1,2+\frac{r}{2}\right)$.

Then the argument is slightly different but we start again from the previous block form. Now, we introduce the translation vector space $S$ of $\calS$ and we apply Corollary
\ref{cor:FLAalt}, which uses
the assumption that $|\F|>\frac{r}{2}+1$ and that every element of $\calS$ has rank at most $r$.
This yields
\begin{equation}\label{eq:FLAbasic}
\forall b \in S, \; D(b)=0 \quad \text{and} \quad B(b)^T K^{-1} B(b)=0.
\end{equation}
From the first identity, we get
$$\dim \calS=\dim \calT+\dim B(S).$$
If $n=r+1$ this is clearly enough to conclude.

Now, assume that $n \geq r+2$.
Then we shall prove that $\dim B(S) \leq (n-r)s$, which will be enough to conclude.
To obtain this, we interpret the second identity in \eqref{eq:FLAbasic} as meaning that the range of $K^{-1} B(b)$
is totally $K$-singular for all $b \in S$. And the expected result will come from the following lemma:

\begin{lemme}\label{lemma:existlagrangian}
Let $U$ and $U'$ be finite-dimensional vector spaces, with $\dim U>1$, and $b$ be a symplectic form on $U'$.
Let $\calV \subseteq \Hom_\F(U,U')$ be a linear subspace in which every element has its range totally $b$-singular.
Then
$$\dim \calV \leq \frac{(\dim U)(\dim U')}{2}\cdot$$
Moreover, if $\dim U>2$ and $\dim \calV=\frac{(\dim U)(\dim U')}{2}$ then
$\calV=\Hom_\F(U,\calL)$ for some Lagrangian\footnote{
For a symplectic form $b$ on a vector space $U'$, a Lagrangian is a totally $b$-singular subspace of $U'$ with dimension
$\frac{\dim U'}{2}\cdot$} $\calL$ of $(U',b)$.
\end{lemme}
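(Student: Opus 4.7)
My plan is to induct on $k := \dim U \geq 2$, treating the base case $k = 2$ by a direct ambient argument and reserving a concluding step for the equality characterization when $k > 2$. For the base case $k = 2$, I fix a basis $(e_1, e_2)$ of $U$ and identify $\calV$ with its image $L \subseteq U' \oplus U'$ under the injection $f \mapsto (f(e_1), f(e_2))$. The hypothesis becomes $b(u, v) = 0$ for every $(u, v) \in L$; substituting $(u_1 + u_2, v_1 + v_2)$ into this relation for $(u_1, v_1), (u_2, v_2) \in L$ yields
\[
b(u_1, v_2) + b(u_2, v_1) = 0 \quad \text{for all } (u_1, v_1), (u_2, v_2) \in L.
\]
Hence $L$ is totally isotropic for the non-degenerate bilinear form $\tilde b$ on $U' \oplus U'$ defined by $\tilde b((u, v), (u', v')) := b(u, v') + b(u', v)$. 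Since $U' \oplus \{0\}$ is a totally $\tilde b$-isotropic subspace of dimension $m := \dim U'$ that equals its own $\tilde b$-orthogonal complement, every totally $\tilde b$-isotropic subspace of $U' \oplus U'$ has dimension at most $m$, giving $\dim \calV \leq m$.

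For the inductive step with $k \geq 3$, I pick a hyperplane $H \subseteq U$ and a vector $z \in U \setminus H$, and consider the restriction map $R : \calV \to \Hom(H, U')$, $f \mapsto f|_H$, with kernel $K := \{f \in \calV : f|_H = 0\}$ embedded in $U'$ via $f \mapsto f(z)$. Set $W := \sum_{g \in R(\calV)} \im g \subseteq U'$. The first key observation is that for every $f \in K$ and every $g \in \calV$, since $f + g \in \calV$ has totally $b$-singular range and $b(g(x), g(z)) = 0$, expanding $b((f+g)(x), (f+g)(z)) = 0$ forces $b(g(x), f(z)) = 0$ for every $x \in H$; hence $f(z) \in W^{\perp_b}$ and
\[
\dim K \leq m - \dim W.
\]
The second key step splits $W = W_0 \oplus W_1$ with $W_0 := W \cap W^{\perp_b}$ the radical of $b|_W$ and $W_1$ a symplectic complement of even dimension $2d$. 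The projection $\pi : W \to W_1$ turns each $g \in R(\calV)$ into a map $\pi \circ g \in \Hom(H, W_1)$ whose range remains totally $b|_{W_1}$-singular; the induction hypothesis (applicable since $\dim H = k - 1 \geq 2$ and $W_1$ is symplectic) yields $\dim(\pi \circ R(\calV)) \leq (k - 1) d$, while the kernel of $\pi \circ (-)$ embeds in $\Hom(H, W_0)$ and has dimension at most $(k - 1) w_0$ with $w_0 := \dim W_0$. Combining $\dim R(\calV) \leq (k - 1)(d + w_0)$ with $\dim K \leq m - (2d + w_0)$ and the elementary inequality $d + w_0 \leq m/2$ (which comes from $W_0 \subseteq W^{\perp_b}$), a direct computation yields $\dim \calV \leq km/2$.

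For the equality case when $k > 2$, tracing through the above inequalities shows that $\dim \calV = km/2$ forces $d = 0$ and $w_0 = m/2$, so $W = \calL_H$ is a Lagrangian of $(U', b)$ for every choice of hyperplane $H$. I then show that $\calV \cdot U := \sum_{f \in \calV} \im f$ is totally $b$-singular: given any $u_1, u_2 \in U$ and $f_1, f_2 \in \calV$, the hypothesis $\dim U \geq 3$ lets me pick a hyperplane $H$ containing both $u_1$ and $u_2$, so $f_i(u_i) \in f_i(H) \subseteq W = \calL_H$ and $b(f_1(u_1), f_2(u_2)) = 0$. Consequently $\dim(\calV \cdot U) \leq m/2$, while $\calV \subseteq \Hom(U, \calV \cdot U)$ combined with $\dim \calV = km/2$ forces $\calV \cdot U$ to be a Lagrangian $\calL$ and $\calV = \Hom(U, \calL)$ by a dimension count. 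The principal obstacle is the sharpening in the inductive step: the naive bound $\dim R(\calV) + \dim K \leq (k - 1) m/2 + m$ overshoots the target by $m/2$, and closing the gap requires both the orthogonality $K \hookrightarrow W^{\perp_b}$ and the symplectic decomposition of $W$; the base case $k = 2$ also resists the inductive scheme and must be handled by the separate ambient symplectic argument on $U' \oplus U'$.
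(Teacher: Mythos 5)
Your proof is correct, but it takes a genuinely different route from the paper's. Both arguments hinge on the same polarization trick (expand $b((f+g)(x),(f+g)(y))=0$ to extract cross-orthogonality), but the paper applies it pointwise and avoids induction entirely: either $\dim \calV x\leq \tfrac{1}{2}\dim U'$ for every nonzero $x$, in which case the bound follows from a basis of $U$, or some $\calV x_0$ is larger, in which case the kernel $\calV'$ of evaluation at $x_0$ satisfies $\calV' x\subseteq (\calV x_0)^{\bot_b}$ and a two-line count finishes; the equality case then comes from the surjectivity of $v\mapsto (v(x_1),\dots,v(x_p))$ onto $\calV x_1\times\cdots\times\calV x_p$, which forces the spaces $\calV x_i$ to be pairwise orthogonal Lagrangians, hence all equal since $p\geq 3$. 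You instead induct on $\dim U$, restricting to a hyperplane $H$, bounding the kernel of restriction inside $W^{\bot_b}$ with $W=\sum_{f\in\calV}f(H)$, and splitting $W$ into its radical and a symplectic complement so that the induction hypothesis applies to the quotient; the base case $\dim U=2$ needs the separate observation that $\calV$ embeds as a totally isotropic subspace of $U'\oplus U'$ for the form $((u,v),(u',v'))\mapsto b(u,v')+b(u',v)$. Your version costs more bookkeeping (the optimization over $(d,w_0)$, and the need to rerun the argument over all hyperplanes to get the equality case), whereas the paper's dichotomy is shorter and extracts the Lagrangian from a single basis; on the other hand your decomposition of $W$ makes explicit where non-degeneracy of $b$ enters, and your final identification of $\calL$ as $\sum_{f\in\calV}\im f$ is arguably more transparent. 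Both arguments are valid over any field, including characteristic $2$.
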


\begin{proof}
Set $p:=\dim U$ and $q:=\dim U'$ for convenience, and $r:=\frac{q}{2}\cdot$

If $\dim \calV x \leq r$ for all $x \in \F^p \setminus \{0\}$, then we directly
have $\dim \calV \leq p r$ by taking a basis of $U$.
Now, assume that $\dim \calV x_0 > r$ for some $x_0 \in U \setminus \{0\}$.
Consider then the subspace $\calV':=\{u \in \calV : u(x_0)=0\}$.
Let $x \in U$. Let $u' \in \calV'$ and $u \in \calV$. Then $(u+u')(x)$ is $b$-orthogonal to $(u+u')(x_0)=u(x_0)$, and $u(x)$ is $b$-orthogonal to $u(x_0)$. Hence $u'(x)$ is $b$-orthogonal to $u(x_0)$. Thus $\calV' x \subseteq (\calV x_0)^{\bot_b}$. By taking a basis of a complementary subspace of $\F x_0$ in $U$, we deduce that
$$\dim \calV' \leq (p-1) \dim (\calV x_0)^{\bot_b}.$$
Hence by the rank theorem
$$\dim \calV = \dim (\calV x_0)+\dim \calV' \leq \dim U'+(p-2) \dim (\calV x_0)^{\bot_b} \leq 2r+(p-2)\, r=pr.$$
Note that the last inequality is sharp if $p-2>0$.
Now, assume that $p>2$ and $\dim \calV=pr$. Hence by the last remark we must have $\dim \calV x \leq r$ for all $x \in U \setminus \{0\}$.
Take a basis $(x_1,\dots,x_p)$ of $U$.
Since $\dim \calV=pr$, the linear mapping
$\psi : v \in \calV \mapsto (v(x_1),\dots,v(x_p)) \in \calV x_1 \times \cdots \times \calV x_p$ must be surjective and
all the $\calV x_i$'s must have dimension $r$. By the surjectivity of $\psi$, we deduce that
$\calV x_1,\dots,\calV x_p$ are pairwise $b$-orthogonal.
But since they all have dimension $r$ we have $\calV x_i=(\calV x_j)^{\bot_b}$ for all distinct $i,j$ in $\lcro 1,p\rcro$, and since
$p \geq 3$ we obtain that all the $\calV x_i$'s are equal. Their common value is then a Lagrangian $\calL$ of $(U',b)$.
Hence $\calV \subseteq \Hom_\F(U,\calL)$ and we conclude by $\calV=\Hom_\F(U,\calL)$ since the dimensions are equal.
\end{proof}

This complete the proof of Theorem \ref{theo:dim}.

\section{Affine subspaces of alternating forms with constant rank and large dimension}\label{section:maxspaces}

Here, we prove Theorem \ref{theo:dimmax}.
We come back to the situation of the previous section.
Now, we assume that $n>r+2$ and that $\calS$ has the critical dimension $s(n-s-1)$, with all the forms in $\calS$ of rank $r$.
With the previous proof, we gather in particular that $\dim B(S)=s(n-r)$ and $\dim \calT=s(s-1)$, and we can use the last statement of Lemma \ref{lemma:existlagrangian} to obtain a Lagrangian $\calL$ of $\F^r$ for the symplectic form $(X,Y) \mapsto X^TKY$, such that $K^{-1} B(\calS)$ is the set of all matrices
of $\Mat_{r,n-r}(\F)$ with range included in $\calL$.

\vskip 3mm
\noindent \textbf{Step 1: Proving that $\calL$ is totally $A(s)$-singular for all $s \in \calS$} \\
Let us take an arbitrary linear section $\theta : B(S) \rightarrow S$ of the projection of $S$ onto $B(S)$.
By Corollary \ref{cor:FLAalt}, we find that
\begin{equation}\label{eq:FLA}
\forall N \in B(S), \; (K^{-1}N)^T A(\theta(N))\, K^{-1} N=0.
\end{equation}
For $i \in \lcro 1,n-r\rcro$, put
$$f_i : X_i \in \calL \mapsto A\left(\theta\left(\begin{bmatrix}
0 & \cdots &  0 &  K X_i  & 0 & \cdots&  0
\end{bmatrix}\right)\right)$$
where the $K X_i$ vector appears on the $i$-th column.

Let $k \in \lcro 1,n-r\rcro$. Choose distinct elements $i,j$ in  $\lcro 1,n-r\rcro \setminus \{k\}$ (this is possible because $n > r+2$).
Applying \eqref{eq:FLA} at the $(i,j)$-spot yields
$$\forall (X_1,\dots,X_{n-r})\in \calL^{n-r},\quad
X_i^T\, \sum_{\ell=1}^{n-r} f_\ell(X_\ell)\, X_j=0.$$
Replacing $X_k$ with $0$ and subtracting the two identities thereby obtained, we get
$$\forall (X_i,X_j,X_k) \in \calL^3, \; X_i^T f_k(X_k) X_j=0.$$
Fixing $X_k$ and varying $X_i$ and $X_j$, we obtain that $\calL$ is totally $f_k(X_k)$-singular.
Varying $k$ and $X_k$ then yields that $\calL$ is totally $A(\theta(N))$-singular for all $N \in B(S)$.

As this yields for every choice of the section $\theta$, we conclude that $\calL$ is totally $A(b)$-singular for all
$b \in S$. Because it is also totally $K$-singular, we conclude that $\calL$ is totally $A(b)$-singular for all $b \in \calS$.

\vskip 3mm
\noindent \textbf{Step 2: Preparing the reduced form} \\
Now we refine the choice of the starting basis so that $\calL=\{0\} \times \F^s$
and $K=\begin{bmatrix}
0 & I_s \\
-I_s & 0
\end{bmatrix}$.
In that case every matrix of $B(S)$ has its rows zero starting from the $(s+1)$-th.
Note that $B(\calS)=B(S)$ because $B(s_0)=0$. And finally the first step shows that
every matrix $A(b)$ with $b \in \calS$ has its lower-right $s \times s$ block equal to zero.
Therefore, for every $b$ of $\calS$ we now have
$$M(b)=\begin{bmatrix}
U(b) & R(b) \\
-R(b)^T & [0]
\end{bmatrix} \; \text{for some $U(b) \in \Mata_s(\F)$ and some $R(b) \in \Mat_{s,n-s}(\F)$.}$$
Next, note that $R(\calS)$ is an affine subspace of $\Mat_{s,n-s}(\F)$ and
$\dim R(\calS) \geq \dim \calS-\frac{s(s-1)}{2}=\dim \Mat_{s,n-s}(\F)-\frac{s(s+1)}{2}$.
Moreover, for all $b \in \calS$, we see that $\rk M(b) \leq s+\rk R(b)$ (erase the first $s$ columns) and hence $\rk R(b) \geq s$.
Hence every matrix in $R(\calS)$ has rank $s$.

\vskip 3mm
\noindent \textbf{Step 3: Concluding the reduction} \\
Noting that $\frac{r}{2}+2>2$, we see that Theorem \ref{theo:affinemax} applies to $R(\calS)$.
This yields $Q \in \GL_{s}(\F)$ and $Q' \in \GL_{n-s}(\F)$ (actually, one could take $Q=I_s$)
and an affine subspace $\calM$ of nonsingular matrices of $\Mat_s(\F)$, with dimension $\frac{s(s-1)}{2}$,
such that $Q R(\calS) Q'=\widetilde{\calM}^{(n-s)}$.
Then $P:= Q \oplus (Q')^T$ belongs to $\GL_n(\F)$ and $P M(\calS) P^T \subseteq \widetilde{\calM}^{(n)}_\alt$.
As both spaces have dimension $s(n-s-1)$, we conclude that
$$P M(\calS) P^T=\widetilde{\calM}^{(n)}_\alt.$$
This completes the proof of the first statement in Theorem \ref{theo:dimmax}.

\vskip 3mm
\noindent \textbf{Step 4: Uniqueness} \\
It remains to prove that the equivalence class of $\calM$ is uniquely determined by $\calS$.
Towards this end, we choose a basis $(e_1,\dots,e_n)$ of $V$ in which $\calS$ is represented by $\widetilde{\calM}^{(n)}$.
Note that $\calL:=\Vect(e_{s+1},\dots,e_n)$ is totally singular for all the forms in $\calS$.
The key is to prove that it is the sole such space with dimension $n-s$.
So, we take an arbitrary subspace $\calL' \subseteq V$ with dimension $n-s$ and which is totally singular for all the forms in $\calS$.
Then clearly $\calL'$ is also totally singular for all the forms in the translation vector space $S$ of $\calS$.

First of all, we prove that $\dim (\calL \cap \calL')\geq n-s-1$. To see this, note from the definition of $\widetilde{\calM}^{(n)}_\alt$
that $S$ contains every alternating form on $V$ whose radical includes $\calL$. If some pair $(x,y)\in (\calL')^2$ is linearly independent modulo
$\calL$, then among such forms we can choose one $b$ such that $b(x,y) \neq 0$, contradicting a previous statement
(indeed, extend $(x,y,e_{s+1},\dots,e_n)$ into a basis $(x,y,e_{s+1},\dots,e_n,f_1,\dots,f_{s-2})$ of $V$,
take the dual basis $(\varphi_1,\dots,\varphi_n)$ and consider the alternating form $b : (z,z') \mapsto \varphi_1(z)\,\varphi_2(z')-\varphi_1(z')\,\varphi_2(z)$).
Hence the projection of $\calL'$ onto $V/\calL$ has dimension at most $1$, and we conclude that $\dim (\calL \cap \calL') \geq n-s-1$.

Now, assume that $\dim (\calL \cap \calL')=n-s-1$, so that $\dim(\calL+\calL')=n-s+1$. Note that, for all $x \in \calL \cap \calL'$
and all $b \in S$, the linear form $b(-,x)$ vanishes everywhere on $\calL+\calL'$, whence
$$\dim \{b(-,x) \mid b \in S\} \leq \codim_V(\calL+\calL')=s-1.$$
Observing the last $n-r$ columns in the matrices of $\widetilde{\calM}^{(n)}$, this forces
$(\calL \cap \calL') \cap \Vect(e_{r+1},\dots,e_n)=\{0\}$, and since we are dealing with subspaces of $\calL$ we derive that
$$\dim (\calL \cap \calL') \leq (n-s)-(n-r)=s.$$
But this would yield $n \leq 2s+1$, in contradiction with the assumption $n > r+2$.
This shows that $\calL'=\calL$.

Now we can complete the proof. Let $\calM'$ be an affine subspace of nonsingular matrices of
$\Mat_s(\F)$, with dimension $\frac{s(s-1)}{2}$, and assume that $\widetilde{\calM'}^{(n)}_\alt$ represents $\calS$ in some basis
$(e'_1,\dots,e'_n)$. Then $\calL'=\Vect(e'_{s+1},\dots,e'_n)$ is totally singular for all the elements of $\calS$, and hence
it equals $\Vect(e_{s+1},\dots,e_n)$ by the first part of this step. Therefore the matrix $P$ of coordinates of
$(e'_1,\dots,e'_n)$ in $(e_1,\dots,e_n)$ reads
$$P=\begin{bmatrix}
P_1 & [0] \\
[?]_{(n-s) \times s} & P_3
\end{bmatrix} \quad \text{with $P_1 \in \GL_s(\F)$ and $P_3 \in \GL_{n-s}(\F)$.}$$
Moreover $P^T \widetilde{\calM}^{(n)}_\alt P=\widetilde{\calM'}^{(n)}_\alt$. Extracting the upper-right blocks, this yields $P_1^T \widetilde{\calM}^{(n-s)} P_3=\widetilde{\calM'}^{(n-s)}$. By the uniqueness statement in Theorem \ref{theo:affinemax} we conclude that $\calM$ is equivalent to $\calM'$. This completes the proof of Theorem \ref{theo:dimmax}.

\section{Open questions and comments}

In this final section, we wish to make some comments on the previous results and their limitations.
We have already pointed to the fact that Theorem \ref{theo:dimmax} has no immediate adaptation to the case $n=r+2$, and
the assumption $n>r+2$ was extensively used in our proof. The case $n=r+1$ seems to be even more difficult.
What is remarkable in our proof is that, although $\calT$ was shown early on to be an affine subspace of invertible elements of $\Mata_r(\F)$
with the greatest possible dimension (that is, $s(s-1)$), we did not require any classification of such spaces, i.e.\ of the solution to the case
$n=r$. We suspect that such a solution is unavoidable for the case $n=r+1$, and for $n=r+2$ we also have the feeling that it might also be.

At this point of course we have not formulated any conjecture on the form of the optimal spaces for $n=r$, i.e.\ of the
affine subspaces of $\Mata_r(\F)$ with dimension $s(s-1)$ in which all the elements are invertible. Actually, those spaces are known if $\F$ is algebraically closed with characteristic other than $2$. In that case indeed, trivial spectrum subspaces coincide with nilpotent subspaces, and hence one of the main results from \cite{structuredGerstenhaber1} (theorem 1.9 there) yields that, for a symplectic form $s_0$ on a $2s$-dimensional vector space, and for every trivial spectrum subspace $\calS$ of $\calA_{s_0}$, there exists an $s_0$-symplectic basis $(e_1,\dots,e_s,f_1,\dots,f_s)$ of $V$ in which $\calS$ is represented by the space of all matrices of the form
$$\begin{bmatrix}
A & B \\
[0]_{s \times s} & A^T
\end{bmatrix} \quad \text{with $A \in \NT_s(\F)$ and $B \in \Mata_s(\F)$}.$$
Hence, it follows that, up to congruence, the sole affine subspace of $\Mata_r(\F)$ that has dimension $s(s-1)$ and constant rank $r$
is
$$\left\{\begin{bmatrix}
[0]_{s \times s} & I_s+A \\
-(I_s+A)^T & B
\end{bmatrix} \mid A \in \NT_s(\F) \; \text{and}\; B \in \Mata_s(\F)\right\}.$$

Now, an apparently reasonable conjecture would generalize the above to trivial spectrum subspaces, as
follows: Instead of $\NT_s(\F)$, we take an arbitrary trivial spectrum (linear) subspace $\calV$ of $\Mat_s(\F)$ with dimension $\frac{s(s-1)}{2}$. Then
the corresponding affine subspace of $\Mata_r(\F)$ with constant rank $r$ and dimension $s(s-1)$ is
$$\left\{\begin{bmatrix}
[0]_{s \times s} & I_s+A \\
-(I_s+A)^T & B
\end{bmatrix} \mid A \in \calV \; \text{and}\; B \in \Mata_s(\F)\right\}.$$
The conjecture would state that every affine subspace of  $\Mata_r(\F)$ with constant rank $r$ and dimension $s(s-1)$ is congruent to a space
of this form. Yet, in working on the present article, we discovered that this conjecture is wrong, which is seen by observing the critical
case where $r=4$. Indeed, if this conjecture held true, then every $2$-dimensional affine subspace of non-singular matrices of $\Mata_4(\F)$
would have a rank $2$ matrix in its translation vector space (as seen from the lower-right $B$ cell).
Yet, this is false, as we shall now see.

Indeed, in $\Mata_4(\F)$ the invertibility of matrices is controlled by the Pfaffian, which is a hyperbolic quadratic form of rank $6$.
And to construct a counter-example is then easy, as it suffices to construct a plane $\calP$ of $\Mata_4(\F)$ that does not go through zero and
that can be embedded in a $3$-dimensional linear subspace of $\Mata_4(\F)$ in which all the \emph{non-zero} elements are invertible.
This is easy if $\F$ has its u-invariant greater than $2$ (otherwise it is not possible!).
We will also assume that $\chi(\F) \neq 2$ for convenience, but fields with characteristic $2$ could also be encompassed.
So, assume that there is a nonisotropic quadratic form $q$ over $\F$ with rank $3$.
Then $q \bot (-q)$ is a hyperbolic form of rank $6$, and hence it is equivalent to the $4$-by-$4$ Pfaffian.
This yields that the latter has a $3$-dimensional linear subspace $W$ on which the restriction of the Pfaffian is nonisotropic,
to the effect that every nonzero element of $W$ has rank $4$. To conclude, we simply pick a $2$-dimensional affine subspace $\calP$ of $W$
that does not contain $0$. Then of course $\calP$ consists only of rank $4$ matrices, and its translation vector space is also included in $W$
and hence contains no rank $2$ matrix. Hence $\calP$ is an exception to the above conjecture.

In practice the above abstract construction can be used to give explicit counterexamples. For example, for $\F=\R$ we take $q=\langle 1,1,1\rangle$,
which prompts us to consider the space $W$ of all real skewsymmetric matrices of the form
$$A(x,y,z)=\begin{bmatrix}
0 & x & y & z \\
-x & 0 & z & -y \\
-y & -z & 0 & x \\
-z & y & -x & 0
\end{bmatrix}$$
with $(x,y,z) \in \R^3$ (note that $\Pf(A(x,y,z))=x^2+y^2+z^2$).
And a counterexample is obtained by considering the plane $\calP=\{A(x,y,1) \mid (x,y)\in \R^2\}$.

This example makes us worry that obtaining the equivalent of Theorem \ref{theo:dimmax} for $n=r$ should be very difficult.
Finally, noting that the case $n=r+1$ in Theorem \ref{theo:dimmax} is close to the equivalent of deriving Theorem \ref{theo:affinemax}
from the main result of \cite{dSPlargeaffinenonsingular}, it can hardly be hoped that anything short of a complete solution to the case $n=r$
is required for the case $n=r+1$, and that, even so, deriving the case $n=r+1$ from the case $n=r$ should be very difficult.

Finally, even if all those problems were solved, there will still remain the issue of the cardinality assumptions in all the results we have proved so far.
The techniques we used made these assumptions unavoidable. Yet, for the equivalent problems on rectangular matrices there have been results
that hold for all fields \cite{dSPlargeaffinerankbelow,dSPlargeaffinenonsingular} (with the exception of the field with two elements). Achieving such a goal in the present context would require a complete revolution in the
methods, and so far we have failed to come up with any valid idea on how to tackle small finite fields.


\begin{thebibliography}{1}
\bibitem{AtkinsonPrim}
M.D. Atkinson,
{Primitive spaces of matrices of bounded rank II.}
J. Austral. Math. Soc. (Ser. A)
{\bf 34} (1983) 306--315.

\bibitem{FLR}
P. Fillmore, C. Laurie, H. Radjavi, 
{On matrix spaces with zero determinant.}
{Linear Multilinear Algebra}
{\bf 18} (1985) 255--266.

\bibitem{Gerstenhaber}
M. Gerstenhaber,
{On nilalgebras and linear varieties of nilpotent matrices (I).}
{Amer. J. Math.}
{\bf 80} (1958) 614--622.

\bibitem{Meshulamsymmetric}
R. Meshulam,
{On two extremal matrix problems.}
Linear Algebra Appl.
{\bf 114-115} (1989) 261--271.

\bibitem{Quinlan}
R. Quinlan,
{Spaces of matrices without non-zero eigenvalues in their field of definition, and a question of Szechtman.}
{Linear Algebra Appl.}
{\bf 434} (2011) 1580--1587.

\bibitem{Rubei}
E. Rubei,
{Affine subspaces of skewsymmetric matrices with constant rank.}
{Linear Multilinear Algebra}
(2023) in press https://doi.org/10.1080/03081087.2023.2198759

\bibitem{dSPgivenrank}
C. de Seguins Pazzis,
{On the matrices of given rank in a large subspace.}
Linear Algebra Appl.
{\bf 435-1} (2011) 147--151.

\bibitem{dSPlargeaffinerankbelow}
C. de Seguins Pazzis,
{Large affine spaces of matrices with rank bounded below.}
Linear Algebra Appl.
{\bf 437-2} (2012) 499--518.

\bibitem{dSPlargeaffinenonsingular}
C. de Seguins Pazzis,
{Large affine spaces of non-singular matrices.}
Trans. Amer. Math. Soc.
{\bf 365} (2013) 2569--2596.

\bibitem{dSPAtkinsontoGerstenhaber}
C. de Seguins Pazzis,
{From primitive spaces of bounded rank matrices to a generalized Gerstenhaber theorem.}
Quart. J. Math.
{\bf 65-2} (2014) 319--325.

\bibitem{LLD1}
C. de Seguins Pazzis,
{Local linear dependence seen through duality I.}
J. Pure Appl. Algebra
{\bf 219} (2015) 2144--2188.

\bibitem{dSPaffinesym}
C. de Seguins Pazzis,
{Affine spaces of symmetric or alternating matrices with bounded rank.}
Linear Algebra Appl.
{\bf 504} (2016) 503--558.

\bibitem{structuredGerstenhaber1}
C. de Seguins Pazzis,
{The structured Gerstenhaber problem I.}
Linear Algebra Appl.
{\bf 567} (2019) 263--298.

\bibitem{structuredGerstenhaber3}
C. de Seguins Pazzis,
{The structured Gerstenhaber problem III.}
Linear Algebra Appl.
{\bf 601} (2020) 134--169.

\end{thebibliography}
\end{document}